\title{Improvement upon Mahler's transference theorem.
       \thanks{ This research was partially supported by
                the grants of RFBR 12-01-00681, 12--01--33080,
                and also by ``Dynasty'' foundation }}
\author{Oleg\,N.\,German, Konstantin\,G.\,Evdokimov}
\date{}
\theoremstyle{definition}
\newtheorem{definition}{Definition}
\theoremstyle{remark}
\newtheorem{remark}{Remark}
\theoremstyle{plain}
\newtheorem{theorem}{Theorem}
\newtheorem{lemma}{Lemma}
\newtheorem{proposition}{Proposition}
\newtheorem{corollary}{Corollary}
\newtheorem{classic}{Theorem}
\DeclareMathOperator{\vol}{vol}
\DeclareMathOperator{\spanned}{span}
\DeclareMathOperator{\interior}{int}
\renewcommand{\phi}{\varphi}
\renewcommand{\vec}[1]{\mathbf{#1}}
\renewcommand{\geq}{\geqslant}
\renewcommand{\leq}{\leqslant}
\newcommand{\e}{\varepsilon}
\newcommand{\R}{\mathbb{R}}
\newcommand{\Z}{\mathbb{Z}}
\newcommand{\La}{\Lambda}
\newcommand{\cL}{\mathcal{L}}
\newcommand{\cS}{\mathcal{S}}
\newcommand{\cB}{\mathcal{B}}
\newcommand{\upA}{\mathrm{A}}
\newcommand{\upB}{\mathrm{B}}
\newcommand{\upD}{\mathrm{D}}
\newcommand{\upH}{\mathrm{H}}
\newcommand{\Sl}{\textup{SL}}
\newcommand{\Gl}{\textup{GL}}
\newcommand{\tr}[1]{{#1}^\intercal}
\begin{document}

\maketitle

%\hskip 139mm УДК: 511.4

\begin{abstract}
  In this paper we obtain new transference theorems improving some classical theorems which belong to Kurt Mahler. We formulate those theorems in terms of consecutive minima of pseudo-compound parallelepipeds.
\end{abstract}

%\noindent
%\textbf{Ключевые слова:} принцип переноса, последовательные минимумы, псевдоприсоединенные параллелепипеды, двойственные решетки.

\section{Introduction}

This paper is devoted to an improvement upon Mahler's theorem published in 1939 in \cite{mahler_casopis_linear,mahler_casopis_convex}, which implies many classical transference theorems. For instance, it implies Khintchine's transference principle \cite{khintchine_palermo} connecting the problem of simultaneous approximation to real numbers $\theta_1,\ldots,\theta_n$ with the problem of approximating zero with the values of the linear form $\theta_1x_1+\ldots+\theta_nx_n+x_{n+1}$ at integer points.

Khintchine's transference principle connects the existence of an integer solution to the system of inequalities
\begin{equation} \label{eq:SA}
  0<|x_{n+1}|\leq X,\quad
  \max_{1\leq i\leq n}|x_{n+1}\theta_i-x_i|\leq Y
\end{equation}
with the existence of an integer solution to the system of inequalities
\begin{equation} \label{eq:LF}
  0<\max_{1\leq i\leq n}|x_i|\leq U,\quad
  |\theta_1x_1+\ldots+\theta_nx_n+x_{n+1}|\leq V,
\end{equation}
where $X,Y,U,V$ are positive real numbers. These two problems are dual in the following sense. Set
\begin{equation} \label{eq:forms_SA}
\begin{split}
  & f_i(x_1,\ldots,x_{n+1})=x_i-\theta_ix_{n+1},\ \ i=1,\ldots,n,\\
  & f_{n+1}(x_1,\ldots,x_{n+1})=x_{n+1}
\end{split}
\end{equation}
and
\begin{equation} \label{eq:forms_LF}
\begin{split}
  & g_i(x_1,\ldots,x_{n+1})=x_i,\ \ i=1,\ldots,n,\\
  & g_{n+1}(x_1,\ldots,x_{n+1})=\theta_1x_1+\ldots+\theta_nx_n+x_{n+1}.
\end{split}
\end{equation}
It is easy to see that $f_1,\ldots,f_{n+1}$ and $g_1,\ldots,g_{n+1}$ are dual bases of the space of linear forms in $\R^{n+1}$, i.e. the matrices of their coefficients $F$ and $G$ (the coefficients of the $i$-th form are written in the $i$-th row) satisfies the relation $F\tr G=I$, where $I$ is the identity matrix and $\tr G$ denotes the transpose of $G$. This means that the above two problems are dual.

Note that the relation $F\tr G=I$ is equivalent to $\tr FG=I$, and also to the fact that the bilinear form
\begin{equation*}
  \Phi(u_1,\ldots,u_{n+1},v_1,\ldots,v_{n+1})=\sum_{i=1}^{n+1}f_i(u_1,\ldots,u_{n+1})g_i(v_1,\ldots,v_{n+1})
\end{equation*}
can be written as
\begin{equation} \label{eq:unity_BLF}
  \Phi(u_1,\ldots,u_{n+1},v_1,\ldots,v_{n+1})=\sum_{i=1}^{n+1}u_iv_i.
\end{equation}
This point of view led Mahler to the following `theorem on a bilinear form' which has become classical.

\begin{classic}[K.\,Mahler, 1937] \label{t:mahler}
  Consider two $d$-tuples of linear forms in $d$ variables:

  $f_1(\vec u),\ldots,f_d(\vec u)$ in $\vec u\in\mathbb R^d$ with matrix $F$, $\det F\neq 0$, and

  $g_1(\vec v),\ldots,g_d(\vec v)$ in $\vec v\in\mathbb R^d$ with matrix $G$, $\det G=D\neq0$. \\
  Suppose that the bilinear form
  \begin{equation} \label{eq:the_BLF}
    \Phi(\vec u,\vec v)=\sum_{i=1}^df_i(\vec u)g_i(\vec v)
  \end{equation}
  has integer coefficients.
%  (что равносильно целочисленности матрицы $\tr FG$).
  Suppose also that the system of inequalities
  \begin{equation} \label{eq:mahler_f}
    |f_i(\vec u)|\leq\lambda_i,\ \ i=1,\ldots,d
  \end{equation}
  admits a nonzero solution in $\Z^d$. Then so does the system of inequalities
  \begin{equation} \label{eq:mahler_g}
    |g_i(\vec v)|\leq(d-1)\lambda/\lambda_i,\ \ i=1,\ldots,d,
  \end{equation}
  where
  \begin{equation} \label{eq:mahler_lambda}
    \lambda=\Big(|D|\prod_{i=1}^d\lambda_i\Big)^{\frac1{d-1}}.
  \end{equation}
\end{classic}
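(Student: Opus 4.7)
The plan is to combine Minkowski's second theorem on successive minima with the integrality of the matrix $M:=\tr F G$ that is forced by the integrality of $\Phi$. The pivotal observation is that for every $\vec u\in\Z^d$ the linear form $L_{\vec u}(\vec v):=\Phi(\vec u,\vec v)=\vec u^\intercal M\vec v$ has integer coefficients in $\vec v$; this is the mechanism that converts integer points of the $f$-parallelepiped into integer-coefficient linear constraints cutting out the desired integer solution of the $g$-system.

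First I would apply Minkowski's second theorem to the parallelepiped $\Pi_f=\{\vec u:|f_i(\vec u)|\le\lambda_i,\ i=1,\ldots,d\}$, whose volume equals $2^d\prod_i\lambda_i/|\det F|$. This furnishes linearly independent integer vectors $\vec u_1,\ldots,\vec u_d$ and successive minima $0<\tau_1\le\cdots\le\tau_d$ with $|f_i(\vec u_k)|\le\tau_k\lambda_i$ and
\[
\tau_1\tau_2\cdots\tau_d\le\frac{|\det F|}{\prod_i\lambda_i}.
\]
The hypothesis gives $\tau_1\le 1$, and one may take $\vec u_1$ to be the prescribed nonzero solution.

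Next I would consider the $d-1$ integer-coefficient forms $L_{\vec u_1},\ldots,L_{\vec u_{d-1}}$. These are linearly independent (since the $\vec u_k$ are and $M$ is non-singular), so their common zero set is a one-dimensional rational subspace of $\R^d$ meeting $\Z^d$ in a rank-one sublattice; let $\vec v_0\ne 0$ be a primitive generator. Cramer's rule applied to the integer coefficient matrix, together with the identity $M=\tr F G$, yields after a short rearrangement an expression of the shape
\[
g_i(\vec v_0)=\pm\frac{D}{c}\,\det\bigl(f_l(\vec u_k)\bigr)_{l\ne i,\ 1\le k\le d-1}
\]
for some positive integer $c$. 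The desired inequality $|g_i(\vec v_0)|\le(d-1)\lambda/\lambda_i$ should then follow from an estimate of this $(d-1)\times(d-1)$ determinant via $|f_l(\vec u_k)|\le\tau_k\lambda_l$, combined with the product bound from Minkowski's second theorem and the identity $\lambda^{d-1}=|D|\prod_l\lambda_l$.

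The hard point will be the determinant estimate: a naive Leibniz expansion produces a spurious factor of $(d-1)!$ where one needs only $d-1$, so the sharp constant forces a finer argument. I expect to need either a careful Hadamard-type inequality that exploits the geometric-mean structure of the successive minima $\tau_1\le\cdots\le\tau_d$, or, equivalently, an appeal to Mahler's inequalities on pseudo-compound parallelepipeds that directly compare the successive minima of the body $\widehat\Pi_f$ (which coincides with $\Pi_g$ up to the lattice map $\vec v\mapsto M\vec v$) with those of $\Pi_f$. Once the sharp determinant bound is in hand, the remaining simplifications using $\lambda^{d-1}=|D|\prod_l\lambda_l$ are routine arithmetic.
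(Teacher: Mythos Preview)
The paper does not give its own proof of Theorem~A: it is quoted as a classical result of Mahler and then reformulated as Theorems~B and~C. What the paper \emph{does} prove is the stronger Theorem~1 (for $k=1$ it yields $\mu_1(\Pi,\Z^d)\le d^{1/(2(d-1))}$, far better than $d-1$), and the method is entirely different from yours: it rests on the section-dual body $\Pi^\wedge$, Vaaler's lower bound on central sections of the cube (giving $\Pi^\ast\subset\sqrt d\,\Pi^\wedge$, Corollary~1), and Lemma~2. No successive-minima vectors of $\Pi_f$, no Cramer determinants, no Hadamard estimates enter.

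Your route does have a genuine gap, precisely where you locate it. After Cramer you need
\[
\bigl|\det\bigl(f_l(\vec u_k)\bigr)_{l\ne i,\ 1\le k\le d-1}\bigr|\le (d-1)\,\frac{\lambda}{|D|}\cdot\frac{1}{\prod_{l\ne i}\lambda_l^{-1}} \, ,
\]
but with only $|f_l(\vec u_k)|\le\tau_k\lambda_l$ and $\prod_{k=1}^d\tau_k\le|\det F|/\prod\lambda_i$ this is not obtainable: Hadamard on columns gives a factor $(\sum_{k\le d-1}\tau_k^2)^{(d-1)/2}$, Leibniz gives $(d-1)!\prod_{k\le d-1}\tau_k$, and neither $\sum\tau_k^2$ nor $\prod_{k\le d-1}\tau_k$ is controlled by Minkowski's second theorem (you only know $\tau_1\le1$; the remaining $\tau_k$ can be large). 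Invoking ``Mahler's inequalities on pseudo-compound parallelepipeds'' to close this is circular, since that statement is Theorem~D of the paper and is essentially equivalent to the reformulated Theorem~A you are trying to prove.

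The standard direct argument avoids all of this by using only \emph{one} integer vector $\vec u$ (the one given by hypothesis), not $d-1$ of them. Pick $i_0$ maximizing $|f_{i_0}(\vec u)|/\lambda_{i_0}$, apply Minkowski's linear-forms theorem to the $d$ forms $\Phi(\vec u,\cdot),\,g_i\ (i\ne i_0)$ with bounds $1,\ \lambda/\lambda_i$; the determinant is $|f_{i_0}(\vec u)\,D|\le\lambda_{i_0}|D|$ and the volume condition is exactly met. This produces a nonzero integer $\vec v$ with $\Phi(\vec u,\vec v)=0$ (integrality forces equality) and $|g_i(\vec v)|\le\lambda/\lambda_i$ for $i\ne i_0$; then the relation $f_{i_0}(\vec u)g_{i_0}(\vec v)=-\sum_{i\ne i_0}f_i(\vec u)g_i(\vec v)$ together with the maximality of $|f_{i_0}(\vec u)|/\lambda_{i_0}$ gives $|g_{i_0}(\vec v)|\le(d-1)\lambda/\lambda_{i_0}$. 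No determinant estimate is needed.
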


Theorem \ref{t:mahler} was improved by the first author in \cite{german_mult,german_JNT,german_AA} for particular cases corresponding to the problems concerning different types of Diophantine exponents. In this paper we improve Theorem \ref{t:mahler} for the arbitrary case. Moreover, we also describe a family of systems analogous to \eqref{eq:mahler_g}, s.t. each system in this family admits a nonzero integer solution, provided so does the system \eqref{eq:mahler_f}. Besides that, we prove the existence of several distinct solutions to \eqref{eq:mahler_g}, among which there are $d-1$ linearly independent ones. The most convenient way to formulate these results is to use consecutive minima of pseudo-compound parallelepipeds.

\section{Transference principle and consecutive minima of \\ pseudo-compound parallelepipeds}

We remind the definitions of consecutive minima and of pseudo-compound parallelepipeds (see also \cite{schmidt_DA}).

\begin{definition} \label{d:consequtive_minima}
  Let $M$ be a convex body in $\R^d$, symmetric w.r.t. the origin. Let $\La$ be a $d$-dimensional lattice in $\R^d$. Then the $k$-th successive minimum $\mu_k(M,\La)$ of $M$ w.r.t. $\La$ is defined as the minimal positive $\mu$ such that $\mu M$ contains $k$ linearly independent points of $\La$.
\end{definition}

\begin{definition}
  Let $h_1,\ldots,h_d$ be $d$ linear forms in $\R^d$ with matrix $H$, $\det H=1$, and let $h_1^\ast,\ldots,h_d^\ast$ be the dual set of linear forms, i.e. $\langle h_i,h_j^\ast\rangle=\delta_{ij}$, where $\langle \,\cdot\,,\cdot\,\rangle$ denotes inner product. Given positive numbers $\eta_1,\ldots,\eta_d$, consider the parallelepiped
  \[ \Pi=\Big\{ \vec z\in\R^d \,\Big|\, |h_i(\vec z)|\leq\eta_i,\ i=1,\ldots,d \Big\}. \]
  Then the parallelepiped
  \[ \Pi^\ast=\Big\{ \vec z\in\R^d \,\Big|\, |h_i^\ast(\vec z)|\leq\frac1{\eta_i}\prod_{j=1}^d\eta_j,\ i=1,\ldots,d \Big\} \]
  is called \emph{pseudo-compound} for $\Pi$.
\end{definition}

Let us reformulate Theorem \ref{t:mahler} in terms of pseudo-compound parallelepipeds and their consecutive minima. We shall do it in two steps.

First, let us show that $D$ can be considered to be equal to $1$. For each $i=1,\ldots,d$ set
\[ f_i'=D^{1/d}f_i,\ \ g_i'=D^{-1/d}g_i,\ \ \lambda_i'=D^{1/d}\lambda_i,\ \ \lambda'=\lambda. \]
It can be easily verified that substitution of $f_i$, $g_i$, $\lambda_i$, $\lambda$, $D$ with $f_i'$, $g_i'$, $\lambda_i'$, $\lambda'$, $1$ respectively preserves the statement of Theorem \ref{t:mahler}. Hence, indeed, we can set $D=1$. Which will be assumed throughout the rest of the paper.

Let us now consider the lattices $F\Z^d$ and $G\Z^d$. The relation \eqref{eq:the_BLF} means that each of these lattices is a sublattice of the other's dual. We remind the definition.

\begin{definition}
  Let $\La$ be a $d$-dimensional lattice in $\R^d$. Let $\langle\,\cdot\,,\cdot\,\rangle$ denote inner product in $\R^d$. Then the lattice
  \[ \La^\ast=\big\{\, \vec z\in\R^d \,\big|\ \langle\vec z,\vec w\rangle\in\Z\text{ for all }\vec w\in\La \,\big\} \]
  is called \emph{dual} for $\La$.
\end{definition}

Set $\La=G\Z^d$ and consider the parallelepiped
\[ \Pi=\Big\{ \vec z=\tr{(z_1\,\ldots\,z_d)}\in\R^d \,\Big|\, |z_i|\leq\lambda/\lambda_i,\ i=1,\ldots,d \Big\}, \]
where $\lambda$ is defined by \eqref{eq:mahler_lambda} with $D=1$, i.e. $\lambda=\Big(\displaystyle\prod_{i=1}^d\lambda_i\Big)^{\frac1{d-1}}$.

Then $\det\La=1$, $F\Z^d\subseteq\La^\ast$ and
\[ \Pi^\ast=\Big\{ \vec z=\tr{(z_1\,\ldots\,z_d)}\in\R^d \,\Big|\, |z_i|\leq\lambda_i,\ i=1,\ldots,d \Big\}. \]

Thus, Theorem \ref{t:mahler} actually claims the existence of a nonzero point of $\La$ in $(d-1)\Pi$ provided there is a nonzero point of a sublattice of $\La^\ast$ in $\Pi^\ast$. Clearly, in this statement the words ``of a sublattice'' can be omitted. Besides that, the presence of a nonzero lattice point inside a parallelepiped means exactly that its first minimum w.r.t. this lattice does not exceed $1$. We get the following reformulation of Theorem \ref{t:mahler}.

\begin{classic} \label{t:parallelemahler_La}
  Suppose $\La$ is a $d$-dimensional lattice in $\R^d$ with covolume $1$ and let $\Pi$ be an $\vec 0$-symmetric parallelepiped with facets parallel to coordinate hyperplanes. Then
  \[ \mu_1(\Pi^\ast,\La^\ast)\leq1\implies\mu_1(\Pi,\La)\leq d-1. \]
\end{classic}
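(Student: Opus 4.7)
The plan is to derive Theorem \ref{t:parallelemahler_La} as a direct translation of Mahler's Theorem \ref{t:mahler}, completing the bridge that the excerpt already begins to lay out. Given the abstract data $(\Lambda, \Pi)$, the task is to manufacture two dual systems of linear forms $F, G$ so that Theorem \ref{t:mahler} applies and returns precisely the conclusion $\mu_1(\Pi, \Lambda) \leq d-1$.

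First I would fix a basis of $\Lambda$ and let $G$ be its matrix, so that $\Lambda = G\Z^d$ and, since $\covol\Lambda = 1$, $|\det G|=1$. Setting $F = (\tr G)^{-1}$ gives $\Lambda^\ast = F\Z^d$ and $\tr F G = I$, whence the bilinear form $\Phi(\vec u,\vec v) = \sum_i f_i(\vec u) g_i(\vec v) = \tr{\vec u}\,\vec v$ has integer coefficients and $D = \det G = \pm 1$. This places us in the setting of Theorem \ref{t:mahler} with $D$ normalised to $1$.

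Next I would translate the parallelepipeds. Writing $\Pi = \{\vec z : |z_i| \leq \rho_i,\ i = 1,\dots,d\}$, the defining forms are the coordinate functions, which are self-dual, so $\Pi^\ast = \{\vec z : |z_i| \leq \lambda_i\}$ with $\lambda_i = \prod_{j}\rho_j/\rho_i$. A direct computation then yields $\lambda := (\prod_i \lambda_i)^{1/(d-1)} = \prod_j \rho_j$, whence $\lambda/\lambda_i = \rho_i$; this matches the normalisation in \eqref{eq:mahler_lambda} exactly. The hypothesis $\mu_1(\Pi^\ast, \Lambda^\ast) \leq 1$ then reads: there is a nonzero $\vec u \in \Z^d$ with $|f_i(\vec u)| \leq \lambda_i$, which is precisely the hypothesis \eqref{eq:mahler_f}.

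Theorem \ref{t:mahler} therefore produces a nonzero $\vec v \in \Z^d$ satisfying \eqref{eq:mahler_g}, i.e.\ $|g_i(\vec v)| \leq (d-1)\lambda/\lambda_i = (d-1)\rho_i$, which says that $G\vec v$ is a nonzero point of $\Lambda$ inside $(d-1)\Pi$; hence $\mu_1(\Pi,\Lambda) \leq d-1$. I do not foresee any genuine obstacle, since the step of passing from a sublattice $F\Z^d \subseteq \Lambda^\ast$ to $\Lambda^\ast$ itself, flagged in the excerpt, is automatic here: the particular $F$ chosen above already yields $F\Z^d = \Lambda^\ast$.
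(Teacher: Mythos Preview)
Your proposal is correct and follows essentially the same approach as the paper: the paper presents Theorem~\ref{t:parallelemahler_La} as a direct reformulation of Theorem~\ref{t:mahler} via the dictionary $\La=G\Z^d$, $F=(\tr G)^{-1}$, $\Pi=\{\vec z:|z_i|\leq\lambda/\lambda_i\}$, and you simply run that dictionary in the reverse direction, starting from $(\La,\Pi)$ and manufacturing $F,G,\lambda_i$. The only cosmetic difference is that the paper allows $F\Z^d$ to be a proper sublattice of $\La^\ast$ (coming from the general integrality condition on $\Phi$) and then remarks that this can be dropped, whereas your specific choice $F=(\tr G)^{-1}$ gives $F\Z^d=\La^\ast$ outright, as you note.
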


Note that for each operator $\mathrm A\in\Sl_d(\R)$ we have
\[ (\mathrm A\Pi)^\ast=(\mathrm A^\ast)^{-1}\Pi^\ast\ \ \text{ and }\ \ (\mathrm A\La)^\ast=(\mathrm A^\ast)^{-1}\La^\ast, \]
where $\mathrm A^\ast$ is the conjugate for $\mathrm A$. Therefore, we can map $\La$ onto $\Z^d$ and thus get another reformulation of Theorem \ref{t:mahler}, ``dual'' to the formulation of Theorem \ref{t:parallelemahler_La}, but slightly more concise.

\begin{classic} \label{t:parallelemahler_Zd}
  Let $\Pi$ be an arbitrary $\vec 0$-symmetric parallelepiped in $\R^d$. Then
  \[ \mu_1(\Pi^\ast,\Z^d)\leq1\implies\mu_1(\Pi,\Z^d)\leq d-1. \]
\end{classic}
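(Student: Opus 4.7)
I would follow Mahler's classical hyperplane-reduction argument. The hypothesis $\mu_1(\Pi^\ast,\Z^d)\leq 1$ furnishes a nonzero integer vector $\vec m\in\Pi^\ast$, which may be assumed primitive. The plan is to seek the vector certifying $\mu_1(\Pi,\Z^d)\leq d-1$ inside the rank-$(d-1)$ sublattice $\La':=\Z^d\cap\vec m^\perp$; its covolume in the hyperplane $L:=\vec m^\perp$ equals $|\vec m|$. Any nonzero element of $\La'$ that lies in $B:=(d-1)\Pi$ is automatically a nonzero integer point satisfying $|h_i|\leq (d-1)\eta_i$ for every $i$, yielding the desired bound.

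To produce such an element, I would apply Minkowski's first theorem inside $L$ to the centrally symmetric convex body $B\cap L$ and the lattice $\La'$. This reduces the task to the volume bound $\vol_{d-1}(B\cap L)\geq 2^{d-1}|\vec m|$, which I would establish via Brunn's theorem (a corollary of Brunn--Minkowski): for a centrally symmetric convex body $K$ and a hyperplane $L$ through $\vec 0$,
\[
  \vol_{d-1}(K\cap L)\;\geq\;\frac{\vol_d(K)\,|\vec m|}{2\,h(K,\vec m)},
\]
where $h(K,\vec m)=\sup_{\vec z\in K}\langle\vec z,\vec m\rangle$ is the support function. Normalizing $\det H=1$ gives $\vol_d(B)=2^d(d-1)^d\prod_i\eta_i$. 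For the support function, decomposing $\vec z\in\Pi$ in the dual basis as $\vec z=\sum_i h_i(\vec z)h_i^\ast$ produces $\langle\vec z,\vec m\rangle=\sum_i h_i(\vec z)h_i^\ast(\vec m)$; maximizing over $|h_i(\vec z)|\leq\eta_i$ and inserting the pseudo-compound inequalities $|h_i^\ast(\vec m)|\leq\prod_j\eta_j/\eta_i$ yields $h(B,\vec m)\leq (d-1)d\prod_j\eta_j$. Substituting gives
\[
  \vol_{d-1}(B\cap L)\;\geq\;\frac{2^{d-1}(d-1)^{d-1}\,|\vec m|}{d},
\]
which is at least $2^{d-1}|\vec m|$ precisely when $(d-1)^{d-1}\geq d$, i.e.\ for every $d\geq 3$.

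The main obstacle is the low-dimensional case $d=2$, where the required inequality fails by a factor of two and Brunn--Minkowski is not sharp enough. This case I would handle by exploiting a dimension-two identity: with $\det H=1$, a direct computation shows $h_1^\ast(\vec z)=h_2(J\vec z)$ and $h_2^\ast(\vec z)=-h_1(J\vec z)$, where $J\in\Sl_2(\Z)$ is the $90^\circ$ rotation $(z_1,z_2)\mapsto(-z_2,z_1)$. Consequently $\vec m\mapsto J\vec m$ sends any nonzero integer point of $\Pi^\ast$ directly to a nonzero integer point of $\Pi$, giving even the stronger bound $\mu_1(\Pi,\Z^2)\leq 1$. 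Combining this with the Brunn argument for $d\geq 3$ completes the proof.
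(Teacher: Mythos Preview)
Your argument is correct. The Brunn central-section bound $\vol_{d-1}(K\cap L)\geq\vol_d(K)\,|\vec m|/(2h(K,\vec m))$, the support-function estimate $h(\Pi,\vec m)\leq d\prod_j\eta_j$, and the covolume identity $\det\La'=|\vec m|$ for primitive $\vec m$ are all in order, and the inequality $(d-1)^{d-1}\geq d$ holds (strictly) for $d\geq3$, so Minkowski applies. The separate $d=2$ computation via the rotation $J$ is also fine.

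The paper, however, does not prove this statement from scratch: Theorem~\ref{t:parallelemahler_Zd} is presented there merely as a reformulation of Mahler's classical Theorem~\ref{t:mahler}, obtained by normalizing $D=1$, passing to the lattice $\La=G\Z^d$ (Theorem~\ref{t:parallelemahler_La}), and then mapping $\La$ to $\Z^d$ by an element of $\Sl_d(\R)$. So your route is genuinely different: a direct, self-contained proof rather than a citation-plus-reformulation.

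It is worth noting that your approach is a close cousin of the machinery the paper develops \emph{later} to prove its sharper results. Both your argument and the paper's section-dual technique take the integer point $\vec m\in\Pi^\ast$, pass to the orthogonal hyperplane $\vec m^\perp$, and invoke Minkowski there; the paper's Lemma~\ref{l:wedge_minkowski} and statement~\ref{list:wedge_mu} of Lemma~\ref{l:wedge_properties} encode exactly this step. The difference is the section-volume bound: you use Brunn's inequality, which yields the factor $(d-1)^{d-1}/d$ and hence the constant $d-1$; the paper (via Corollary~\ref{cor:para_wedge}) uses Vaaler's theorem on cube sections, which is sharper and produces the constant $\sqrt d$, ultimately giving $d^{1/(2(d-1))}$ in Theorem~\ref{t:all_minima}. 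In other words, replacing Brunn by Vaaler in your argument would already recover the paper's improvement for $k=1$.
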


At the same time Mahler \cite{mahler_casopis_convex}
%при помощи теоремы Минковского о последовательных минимумах
proved a theorem concerning \emph{all} of the consecutive minima, which can be formulated as follows.

\begin{classic}[K.\,Mahler, 1938] \label{t:mahler_koerper}
  Let $\Pi$ be an arbitrary $\vec 0$-symmetric parallelepiped in $\R^d$. Then
  \begin{equation} \label{eq:mahler_koerper}
    \frac{2^d}{d\vol\Pi}\leq\mu_k(\Pi^\ast,\Z^d)\mu_{d+1-k}(\Pi,\Z^d)\leq\frac{2^dd!}{\vol\Pi}\,.
  \end{equation}
\end{classic}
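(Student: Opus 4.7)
The strategy is to prove both inequalities via the integer pairing between $\Z^d$ and its self-dual lattice. Write $V=\vol(\Pi)/2^d=\eta_1\cdots\eta_d$, so that $\Pi=\{\vec z:|h_i(\vec z)|\le\eta_i\}$ and $\Pi^\ast=\{\vec z:|h_i^\ast(\vec z)|\le V/\eta_i\}$. The crucial identity is that, in the dual pair of bases $\{h_l\}$, $\{h_l^\ast\}$, any two vectors satisfy $\langle\vec z,\vec w\rangle=\sum_l h_l^\ast(\vec z)\,h_l(\vec w)$.

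For the lower bound I would choose linearly independent $\vec z_1,\ldots,\vec z_k\in\mu_k(\Pi^\ast,\Z^d)\,\Pi^\ast\cap\Z^d$ and $\vec w_1,\ldots,\vec w_{d+1-k}\in\mu_{d+1-k}(\Pi,\Z^d)\,\Pi\cap\Z^d$. Since their spans have dimensions summing to $d+1>d$, some pair $(\vec z_i,\vec w_j)$ must have $\langle\vec z_i,\vec w_j\rangle\neq 0$, which is then a nonzero integer, hence at least $1$ in absolute value. The pairing identity gives
\[ 1\le|\langle\vec z_i,\vec w_j\rangle|\le\sum_l\mu_k(\Pi^\ast)\tfrac{V}{\eta_l}\cdot\mu_{d+1-k}(\Pi)\eta_l=dV\mu_k(\Pi^\ast)\mu_{d+1-k}(\Pi), \]
which rearranges to the desired lower bound $\mu_k(\Pi^\ast)\mu_{d+1-k}(\Pi)\ge 2^d/(d\vol\Pi)$.

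For the upper bound I would route through the polar body $\Pi^\circ=\conv\{\pm\eta_i^{-1}h_i\}$. Two elementary calculations are needed: expressing everything in the basis $\{h_i\}$ yields the inclusion $V\Pi^\circ\subseteq\Pi^\ast$ (an $\ell^1$-ball sitting inside the circumscribed $\ell^\infty$-ball), while the cross-polytope volume formula together with $\det H=1$ gives the exact equality $\vol\Pi\cdot\vol\Pi^\circ=4^d/d!$. Minkowski's second theorem applied separately to $\Pi$ and to $\Pi^\circ$ and then multiplied produces $\prod_{i=1}^d\mu_i(\Pi,\Z^d)\mu_{d+1-i}(\Pi^\circ,\Z^d)\le 4^d/(\vol\Pi\cdot\vol\Pi^\circ)=d!$, while the polar version of the pairing argument, using $|\langle\vec u,\vec v\rangle|\le 1$ on $\Pi\times\Pi^\circ$, gives $\mu_k(\Pi)\mu_{d+1-k}(\Pi^\circ)\ge 1$ for every $k$. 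Combining these forces $\mu_k(\Pi)\mu_{d+1-k}(\Pi^\circ)\le d!$ for each $k$, and the inclusion $V\Pi^\circ\subseteq\Pi^\ast$ gives $\mu_k(\Pi^\ast)\le V^{-1}\mu_k(\Pi^\circ)$, whence $\mu_k(\Pi^\ast)\mu_{d+1-k}(\Pi)\le d!/V=2^d d!/\vol\Pi$.

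The main obstacle is obtaining the sharp constant $d!$ in the upper bound: applying Minkowski's second theorem directly to the pair $\Pi,\Pi^\ast$ only yields $\prod_i\mu_i(\Pi^\ast)\mu_{d+1-i}(\Pi)\le V^{-d}$, which together with the per-factor lower bound distributes to the weaker estimate $d^{d-1}/V$. The detour through the polar $\Pi^\circ$ is what lets one exploit the exact identity $\vol\Pi\cdot\vol\Pi^\circ=4^d/d!$, valid here because a parallelepiped paired with its polar cross-polytope realizes equality in Mahler's volume product inequality.
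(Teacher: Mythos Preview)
Your argument is correct. The lower bound via the nonvanishing integer pairing is the standard one, and your upper-bound detour through the genuine polar $\Pi^\circ$ is exactly the right move: the product $\prod_k\mu_k(\Pi)\mu_{d+1-k}(\Pi^\circ)\le d!$ comes from Minkowski's second theorem together with the exact Santal\'o-type identity $\vol\Pi\cdot\vol\Pi^\circ=4^d/d!$, the per-factor lower bound $\mu_k(\Pi)\mu_{d+1-k}(\Pi^\circ)\ge 1$ then forces each factor to be at most $d!$, and the elementary inclusion $V\Pi^\circ\subset\Pi^\ast$ transfers this to $\Pi^\ast$.

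As for comparison: the paper does \emph{not} prove this statement. It is quoted as a classical result of Mahler (1938), with a reference to \cite{mahler_casopis_convex}, and is used only as input to derive Theorem~\ref{t:mahler_strongest}. Your proof is in fact the specialization to parallelepipeds of Mahler's original argument for arbitrary symmetric convex bodies $K$: he proves $1\le\mu_k(K,\Z^d)\mu_{d+1-k}(K^\circ,\Z^d)\le d!$ via exactly the two ingredients you use (integer pairing plus Minkowski's second theorem on $K$ and $K^\circ$), and for parallelepipeds the polar is a cross-polytope, giving both the exact value $4^d/d!$ for the volume product and the inclusion $V\Pi^\circ\subset\Pi^\ast$ that lets one pass from $\Pi^\circ$ to the pseudo-compound $\Pi^\ast$. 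Your closing diagnosis is also right: working directly with $\Pi$ and $\Pi^\ast$ loses the sharp constant because $\vol\Pi^\ast=2^dV^{d-1}$ rather than $4^d/(d!\vol\Pi)$.
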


Combining this statement for $k=1$ with Minkowski's theorem on consecutive minima, which claims that
\begin{equation} \label{eq:minkowski}
  \frac{2^d}{d!\vol\Pi}\leq\prod_{i=1}^d\mu_i(\Pi,\Z^d)\leq\frac{2^d}{\vol\Pi}\,,
\end{equation}
we get the following improvement of Theorem \ref{t:parallelemahler_Zd}.

\begin{classic} \label{t:mahler_strongest}
  Let $\Pi$ be an arbitrary $\vec 0$-symmetric parallelepiped in $\R^d$. Let
  \[ \mu_1(\Pi^\ast,\Z^d)\leqslant1\quad\text{ and }\quad\mu_1(\Pi,\Z^d)\geqslant1. \]
  Then
  \[ \mu_k(\Pi,\Z^d)\leqslant d^{\raisebox{1ex}{$\frac1{d-k}$}},\quad k=1,\ldots,d-1. \]
\end{classic}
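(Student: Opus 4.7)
The plan is to derive the desired bound on each intermediate minimum by squeezing $\prod_{i=1}^{d-1}\mu_i(\Pi,\Z^d)$ between an upper bound coming from Minkowski and a lower bound coming from Mahler's theorem on successive minima, and then exploiting the monotonicity of the $\mu_i$ together with the hypothesis $\mu_1(\Pi,\Z^d)\geq 1$.

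First, I would invoke Theorem \ref{t:mahler_koerper} with $k=1$, which gives
\[
  \mu_1(\Pi^\ast,\Z^d)\,\mu_d(\Pi,\Z^d)\geq\frac{2^d}{d\vol\Pi}.
\]
Since by assumption $\mu_1(\Pi^\ast,\Z^d)\leq 1$, this yields the lower bound $\mu_d(\Pi,\Z^d)\geq 2^d/(d\vol\Pi)$. Next, the upper half of Minkowski's theorem on successive minima \eqref{eq:minkowski} reads
\[
  \prod_{i=1}^{d}\mu_i(\Pi,\Z^d)\leq\frac{2^d}{\vol\Pi}.
\]
Dividing these two bounds eliminates the volume entirely and gives the key inequality
\[
  \prod_{i=1}^{d-1}\mu_i(\Pi,\Z^d)\leq d.
\]

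Finally, I would use the second hypothesis $\mu_1(\Pi,\Z^d)\geq 1$, which, by the monotonicity $\mu_1\leq\mu_2\leq\cdots\leq\mu_d$, implies $\mu_i(\Pi,\Z^d)\geq 1$ for every $i$. Consequently, dropping the factors $\mu_1,\ldots,\mu_{k-1}\geq 1$ from the product and using $\mu_k\leq\mu_{k+1}\leq\cdots\leq\mu_{d-1}$, we get
\[
  \mu_k(\Pi,\Z^d)^{d-k}\leq\prod_{i=k}^{d-1}\mu_i(\Pi,\Z^d)\leq\prod_{i=1}^{d-1}\mu_i(\Pi,\Z^d)\leq d,
\]
whence $\mu_k(\Pi,\Z^d)\leq d^{1/(d-k)}$ for each $k=1,\ldots,d-1$.

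The argument is quite short and is essentially an exercise in combining the two classical inequalities; the only subtle point is recognizing that the role of the hypothesis $\mu_1(\Pi,\Z^d)\geq 1$ is precisely to allow the lower-index minima $\mu_1,\ldots,\mu_{k-1}$ to be discarded from the product without weakening the estimate. No genuine obstacle is anticipated.
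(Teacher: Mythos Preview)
Your argument is correct and follows exactly the route sketched in the paper: apply the lower bound in Theorem~\ref{t:mahler_koerper} at $k=1$ together with the upper bound in Minkowski's second theorem \eqref{eq:minkowski} to obtain $\prod_{i=1}^{d-1}\mu_i(\Pi,\Z^d)\leq d$, and then use $\mu_1(\Pi,\Z^d)\geq1$ with monotonicity of the successive minima to extract the bound on each $\mu_k$. Nothing is missing.
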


One of the main results of this paper is the following improvement of Theorem \ref{t:mahler_strongest}.

\begin{theorem} \label{t:all_minima}
  Let $\Pi$ be an arbitrary $\vec 0$-symmetric parallelepiped in $\R^d$. Let
  \[ \mu_1(\Pi^\ast,\Z^d)\leqslant1\quad\text{ and }\quad\mu_1(\Pi,\Z^d)\geq1. \]
  Then
  \begin{equation} \label{eq:all_minima}
    \mu_k(\Pi,\Z^d)\leqslant d^{\raisebox{1ex}{$\frac1{2(d-k)}$}},\quad k=1,\ldots,d-1.
  \end{equation}
\end{theorem}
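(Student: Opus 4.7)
The plan is to combine Minkowski's theorem on successive minima \eqref{eq:minkowski} with an improved Mahler-type lower bound on $\mu_d(\Pi,\Z^d)$, the latter being the central new ingredient of this paper. Writing $V=\vol(\Pi)/2^d$ and applying Minkowski's upper bound gives $\prod_{i=1}^d\mu_i(\Pi,\Z^d)\leq 1/V$. Since $\mu_1(\Pi,\Z^d)\geq 1$, one has $\mu_i\geq 1$ for every $i$, so the factors with $i<k$ drop out and the monotonicity $\mu_k\leq\mu_{k+1}\leq\cdots\leq\mu_{d-1}$ yields
\[
  \mu_k(\Pi,\Z^d)^{d-k}\;\leq\;\prod_{i=k}^{d-1}\mu_i(\Pi,\Z^d)\;\leq\;\frac{1}{V\,\mu_d(\Pi,\Z^d)}.
\]

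The classical derivation of Theorem~\ref{t:mahler_strongest} completes at this stage by substituting the bound $\mu_d\geq 1/(dV)$, which follows from \eqref{eq:mahler_koerper} with $k=1$ and $\mu_1(\Pi^\ast,\Z^d)\leq 1$; the outcome is $\mu_k^{d-k}\leq d$. To obtain the sharper bound $\mu_k\leq d^{1/(2(d-k))}$ asserted here, it is enough to insert in its place the improved estimate
\[
  \mu_d(\Pi,\Z^d)\;\geq\;\frac{1}{\sqrt{d}\,V},
\]
which gives $\mu_k^{d-k}\leq\sqrt{d}$, and the theorem follows on extracting $(d-k)$-th roots.

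The main obstacle, and the heart of the paper's contribution, is therefore this $\sqrt{d}$-sharpening of Mahler's inequality. The classical single-pair integer-inner-product argument bounds $|\langle\vec v,\vec w\rangle|=\bigl|\sum_i h_i(\vec v)h_i^\ast(\vec w)\bigr|\leq dV\mu_d(\Pi)\mu_1(\Pi^\ast)$ for one integer point in each of $\mu_d(\Pi)\,\Pi$ and $\Pi^\ast$; it uses only $\mu_1(\Pi^\ast)\leq 1$ and inevitably loses the factor $d$ from summing over $d$ coordinates. The extra hypothesis $\mu_1(\Pi,\Z^d)\geq 1$ must therefore play an essential role, and I expect the proof to exploit the existence of $d$ linearly independent integer points $\vec v_1,\ldots,\vec v_d\in\mu_d(\Pi,\Z^d)\,\Pi$---so that the matrix $M$ with columns $\vec v_1,\ldots,\vec v_d$ has integer determinant of absolute value at least $1$---combined with a Hadamard-type estimate on $HM$ and with the constraint $V\leq 1$ that $\mu_1(\Pi,\Z^d)\geq 1$ forces via Minkowski's inequality. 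Once the improved lower bound on $\mu_d(\Pi,\Z^d)$ is in place, the chain of inequalities displayed above closes the proof.
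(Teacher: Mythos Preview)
Your reduction is sound up to the point where you need $\prod_{k=1}^{d-1}\mu_k(\Pi,\Z^d)\leq\sqrt{d}$; the final deduction of \eqref{eq:all_minima} from this product bound via $\mu_1\geq 1$ and monotonicity is exactly what the paper does. The gap is that you have not proved the product bound. You propose to obtain it from an improved Mahler-type estimate $\mu_d(\Pi,\Z^d)\geq 1/(\sqrt{d}\,V)$, but this is neither established in your sketch nor proved anywhere in the paper. Your suggested Hadamard argument on the matrix $HM$ formed from $d$ independent integer points in $\mu_d\Pi$ yields at best (taking row norms) $1\leq|\det(HM)|\leq d^{d/2}\mu_d^d\,V$, i.e.\ $\mu_d\geq d^{-1/2}V^{-1/d}$; since $V\leq 1$ this is \emph{weaker} than the bound $d^{-1/2}V^{-1}$ you need, and the computation uses neither $\mu_1(\Pi^\ast)\leq 1$ nor anything beyond the definition of $\mu_d$. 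So the ``heart of the paper's contribution'' as you describe it is not actually there, and your outline does not close.

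The paper bypasses $\mu_d$ entirely. It introduces the \emph{section-dual} body $\Pi^\wedge$ (the intersection body, rescaled by $2^{1-d}$) and proves two facts: first, Vaaler's theorem on hyperplane sections of the cube gives $\Pi^\ast\subset\sqrt{d}\,\Pi^\wedge$ for every $\vec 0$-symmetric parallelepiped (Corollary~\ref{cor:para_wedge}); second, if $\mu_1(\Pi^\wedge,\Z^d)\leq 1$ then $\prod_{k=1}^{d-1}\mu_k(\Pi,\Z^d)\leq 1$ (Lemma~\ref{l:wedge_minkowski}), obtained by applying Minkowski's second theorem to the $(d-1)$-dimensional central section of $\Pi$ orthogonal to a primitive integer vector lying in $\Pi^\wedge$. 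Chaining these two statements gives $\prod_{k=1}^{d-1}\mu_k(\Pi,\Z^d)\leq\sqrt{d}$ directly from $\mu_1(\Pi^\ast,\Z^d)\leq 1$, with the hypothesis $\mu_1(\Pi,\Z^d)\geq 1$ entering only at the final step you already wrote down. The $\sqrt{d}$ saving thus comes from Vaaler's lower bound on cube sections, not from any sharpening of the dual-lattice pairing argument.
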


For $k=2$ we prove a stronger inequality.

\begin{theorem} \label{t:2_minima_case_d}
  Let $\Pi$ be an arbitrary $\vec 0$-symmetric parallelepiped in $\R^d$. Let
  \[ \mu_1(\Pi^\ast,\Z^d)\leqslant1\quad\text{ and }\quad\mu_1(\Pi,\Z^d)>1. \]
  Then
  \begin{equation} \label{eq:2_minima_case_d}
    \mu_2(\Pi,\Z^d)\leqslant c_d,
  \end{equation}
  where $c_d$ is the positive root of the polynomial\,\ $t^{2(d-1)}-(d-1)t^2-1$.
\end{theorem}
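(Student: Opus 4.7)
The hypothesis $\mu_1(\Pi^\ast,\Z^d)\leq 1$ provides a primitive integer vector $\vec u\in\Pi^\ast\cap\Z^d$. I would work with the codimension-one sublattice $\La_0=\vec u^\perp\cap\Z^d$, of covolume $|\vec u|$, and the centrally symmetric $(d-1)$-dimensional slice $\Pi_0=\Pi\cap\vec u^\perp$. The plan is to locate two linearly independent elements of $\Z^d\cap c_d\Pi$ by studying the successive minima of $(\Pi_0,\La_0)$, together with a case analysis according to whether or not the candidate second-minimum vector of $(\Pi,\Z^d)$ lies in $\vec u^\perp$.

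First I would establish the slice estimate $\vol_{d-1}(\Pi_0)/\det\La_0\geq 2^{d-1}/d$. This follows from the slice--width inequality $\vol_{d-1}(\Pi_0)\geq\vol_d(\Pi)/w$, where $w$ is the width of $\Pi$ in direction $\vec u/|\vec u|$; the hypothesis $\vec u\in\Pi^\ast$ forces $w|\vec u|=2\sum_i\eta_i|h_i^\ast(\vec u)|\leq 2d\prod_j\eta_j=2dV$, and substitution with $\vol\Pi=2^dV$ and $\det\La_0=|\vec u|$ yields the claim. Minkowski's second theorem applied to $(\Pi_0,\La_0)$ then gives $\prod_{k=1}^{d-1}\nu_k\leq d$ for $\nu_k=\mu_k(\Pi_0,\La_0)$. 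Since a vector realising $\nu_k$ is an element of $\Z^d$ lying in $\nu_k\Pi$, one has $\mu_k(\Pi,\Z^d)\leq\nu_k$ for $k\leq d-1$, and combined with $\nu_1\geq\mu_1(\Pi,\Z^d)>1$ this already recovers the bound $\mu_2(\Pi,\Z^d)\leq d^{1/(d-2)}$ of Theorem~\ref{t:mahler_strongest}.

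Sharpening from $d^{1/(d-2)}$ to $c_d$ is the substantive part. The form of the polynomial $t^{2(d-1)}-(d-1)t^2-1$ strongly suggests a two-tiered argument: the exponent $2(d-1)$ should emerge by pairing Minkowski's second theorem on $(\Pi_0,\La_0)$ with its counterpart on the dual lattice $\La_0^\ast$ inside $\vec u^\perp$, whereas the summand $(d-1)t^2+1$ should arise from a case split in which $(d-1)$ copies of $t^2$ correspond to the $(d-1)$ free directions of $\La_0$ and the additive $1$ corresponds to the extra direction along $\vec u$ itself. A useful algebraic link is the identity $|\vec u|^2=\sum_i h_i(\vec u)h_i^\ast(\vec u)$ coming from $\Phi(\vec u,\vec u)=\langle\vec u,\vec u\rangle$, which couples the Euclidean length of $\vec u$ to its $\Pi$-norm; I expect this to enter together with Mahler's successive-minima inequality~\eqref{eq:mahler_koerper} at index $k=2$, coupling $\mu_2(\Pi^\ast)$ with $\mu_{d-1}(\Pi)$, to weld the slice bound to the ambient minima of $(\Pi,\Z^d)$.

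\textbf{Main obstacle.} The difficulty is to align the constants so that the combined inequalities yield exactly $\mu_2^{2(d-1)}\leq(d-1)\mu_2^2+1$ and not a weaker polynomial bound. Both the slice-volume estimate (where $\sum_i\eta_i|h_i^\ast(\vec u)|\leq dV$ is in general far from tight) and Mahler's inequality carry intrinsic slack, and extracting the sharp constant will require a judicious choice of the integer vectors invoked at each step and a simultaneous tracking of $|\vec u|$, $V=\prod_j\eta_j$, and the minima $\mu_j(\Pi)$, $\mu_j(\Pi^\ast)$ as the two halves of the argument are multiplied together.
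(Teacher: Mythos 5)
Your slice-based foundation is correct as far as it goes, but it falls well short of the target constant $c_d$, and the route you propose for closing the gap is not substantiated. Let me be specific.

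\textbf{What you have.} The width computation $\sum_i\eta_i|h_i^\ast(\vec u)|\leq dV$ for $\vec u\in\Pi^\ast$, combined with Brunn's theorem on the maximal central section, does give $\vol_{d-1}(\Pi\cap\vec u^\perp)\geq 2^{d-1}|\vec u|/d$, and hence $\prod_{k=1}^{d-1}\mu_k(\Pi_0,\La_0)\leq d$. Since $\mu_k(\Pi,\Z^d)\leq\mu_k(\Pi_0,\La_0)$ and $\mu_1(\Pi,\Z^d)>1$, this yields $\mu_2(\Pi,\Z^d)\leq d^{1/(d-2)}$. That is Theorem~\ref{t:mahler_strongest}, i.e.\ the classical Mahler bound. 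In the language of the paper, your slice-width argument is exactly the inclusion $\Pi^\ast\subset d\,\Pi^\wedge$, whereas Corollary~\ref{cor:para_wedge} gives the much stronger $\Pi^\ast\subset\sqrt d\,\Pi^\wedge$; the improvement from $d$ to $\sqrt d$ is precisely Vaaler's theorem (central sections of $[-1,1]^d$ have volume $\geq 2^{d-1}$), which your argument never invokes. So your worked-out portion does not even reach the intermediate bound $d^{1/(2(d-2))}$ of Theorem~\ref{t:all_minima}, let alone $c_d$.

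\textbf{Where the gap is.} The sharpening to $c_d$ is the entire content of Theorem~\ref{t:2_minima_case_d}, and you explicitly leave it as an unresolved ``main obstacle.'' Your heuristic for the shape of the polynomial (pairing Minkowski on $\La_0$ with Minkowski on $\La_0^\ast$, plus a case split contributing $(d-1)t^2+1$) is speculation; you give no inequality chain that would actually produce $t^{2(d-1)}-(d-1)t^2-1$, and I do not see how one would. In the paper that polynomial has a very concrete origin: Theorem~\ref{t:family} asserts that $\upH_{\pmb\tau}\Pi$ contains a nonzero integer point for \emph{every} $\pmb\tau$ on the hypersurface $\sum_i\tau_i^2=\prod_i\tau_i^2$ (this hypersurface is where the vertex~\eqref{eq:vertex} lies in the Euclidean unit ball, which sits inside $\cB_d^\wedge$ by Vaaler). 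One then takes $t_1$ minimal with $(t_1,\ldots,t_1)\Pi$ containing a nonzero lattice point, then $t_2\geq t_1$ minimal with $(t_1,t_2,\ldots,t_2)\Pi$ containing a second, linearly independent one; if $t_2$ exceeded the root of $t_1^2t^{2(d-1)}=t_1^2+(d-1)t^2$ the interior of $\upH_{\pmb\tau_2}\Pi$ would contain a member of the family with no nonzero integer points, contradicting Theorem~\ref{t:family}. Finally $t_1>1$ is used because the root decreases in $t_1$. None of this — the section-dual body $\Pi^\wedge$, Vaaler's bound, the hyperbolic-shift family, the two-step extremality argument — appears in your plan, and your proposed substitutes (Mahler's two-sided minima inequality at $k=2$, the identity $|\vec u|^2=\sum_i h_i(\vec u)h_i^\ast(\vec u)$) are not assembled into a working chain. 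As it stands, the proposal proves only a strictly weaker classical statement and does not constitute a proof of Theorem~\ref{t:2_minima_case_d}.
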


It can be easily shown that
\begin{equation} \label{eq:c_d_bounds}
  d^{\raisebox{1ex}{$\frac1{2(d-1)}$}}<c_d<d^{\raisebox{1ex}{$\frac1{2(d-2)}$}}.
\end{equation}
Thus, indeed, inequality \eqref{eq:2_minima_case_d} is stronger than \eqref{eq:all_minima} for $k=2$. Besides that, it follows from \eqref{eq:c_d_bounds} that
\[ c_d=1+\dfrac{\ln d}{2d}+O\bigg(\dfrac{\ln^2d}{d^2}\bigg)\quad\text{ as }\quad d\to\infty. \]

For $d=3$ we prove inequalities which are stronger than \eqref{eq:all_minima} and \eqref{eq:2_minima_case_d}, and which are moreover precise.

\begin{theorem} \label{t:2_minima_case_3}
  Let $\Pi$ be an arbitrary $\vec 0$-symmetric parallelepiped in $\R^3$. Let
  \[ \mu_1(\Pi^\ast,\Z^3)\leqslant1\quad\text{ and }\quad\mu_1(\Pi,\Z^3)>1. \]
  Then
  \[ \mu_1(\Pi,\Z^3)\leqslant2/\sqrt3\quad\text{ and }\quad\mu_2(\Pi,\Z^3)\leqslant5/4.\,\ \]
  Moreover, the constants $2/\sqrt3$ and $5/4$ are exact.
\end{theorem}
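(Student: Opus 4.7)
The plan is to normalize via the $\Gl_3(\Z)$-symmetry, pass to a two-dimensional lattice slice of $\Pi$ determined by a primitive lattice point of $\Pi^*$, and then apply the planar Minkowski theorem, extracting the precise constants from a delicate analysis of the slice geometry. As in the reduction preceding Theorem \ref{t:parallelemahler_Zd} we may assume $\det H=1$. Minkowski's first theorem combined with $\mu_1(\Pi,\Z^3)>1$ forces $\eta_1\eta_2\eta_3\leq 1$. Using $\mu_1(\Pi^*,\Z^3)\leq 1$ we pick a primitive vector $\vec v\in\Pi^*\cap\Z^3$ and transform it by an element of $\Gl_3(\Z)$ to $\vec v=\vec e_3$; this preserves $\Z^3$ and all successive minima, and translates $\vec e_3\in\Pi^*$ into the explicit inequality $|(H^{-T}\vec e_3)_i|\leq\prod_{j\neq i}\eta_j$ for $i=1,2,3$.

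Next I would work in the plane $P=\{\vec w\in\R^3:w_3=0\}$ with the planar sublattice $\Lambda_0=\Z^2\times\{0\}$ of covolume $1$, and the $\vec 0$-symmetric planar convex body $K=\Pi\cap P$, generically a centrally symmetric hexagon. Since $K\subset\Pi$ and $\Lambda_0\subset\Z^3$, we have $\mu_i(\Pi,\Z^3)\leq\mu_i(K,\Lambda_0)$ for $i=1,2$, while $\mu_1(\Pi,\Z^3)>1$ gives $\mu_1(K,\Lambda_0)\geq 1$. Two-dimensional Minkowski applied to $K$ and $\Lambda_0$ yields
\[
  \mu_1(K,\Lambda_0)\,\mu_2(K,\Lambda_0)\,\operatorname{area}(K)\leq 4.
\]
Combined with a sharp lower bound for $\operatorname{area}(K)$ derived from $\vec e_3\in\Pi^*$ and $\eta_1\eta_2\eta_3\leq 1$, this should produce $\mu_1(\Pi,\Z^3)\leq 2/\sqrt 3$ and $\mu_2(\Pi,\Z^3)\leq 5/4$.

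The hard part is precisely the sharp area estimate. A direct application of Brunn's concavity principle bounds the height of $\Pi$ in the $\vec e_3$-direction by $\sum_i\eta_i|(H^{-T}\vec e_3)_i|\leq 3\eta_1\eta_2\eta_3$ and yields only $\operatorname{area}(K)\geq 4/3$, hence $\mu_1(\Pi,\Z^3)\leq\sqrt 3$, which reproduces Theorem \ref{t:parallelemahler_Zd} but falls well short of the target $2/\sqrt 3$. Closing this gap will require exploiting the genuinely hexagonal structure of $K$ (the three pairs of parallel sides come from the three pairs of facets of $\Pi$) together with the constraint that $K$ contains no interior $\Lambda_0$-point, presumably in the form of a planar critical-lattice argument. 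For $\mu_2\leq 5/4$ one more ingredient is needed, coupling the planar second minimum with a lower bound on $\mu_3(\Pi,\Z^3)$ coming from Mahler's Theorem \ref{t:mahler_koerper}, with a case split depending on whether the two shortest $\Pi$-vectors both lie in $\Lambda_0$. Sharpness is then established by explicit constructions: for $2/\sqrt 3$, a prism over a regular hexagon critically inscribed in $\Lambda_0$, adjusted in the $\vec e_3$-direction so that $\vec e_3$ lies on the boundary of $\Pi^*$; for $5/4$, a small perturbation of this configuration pushing the second minimum to the extremal value.
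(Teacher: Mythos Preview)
Your slice approach is a natural relative of the paper's own argument---both ultimately rest on the central section of $\Pi$ determined by an integer point of $\Pi^\ast$---but it has two genuine gaps. For $\mu_1\leq 2/\sqrt3$ the bound you need is $\operatorname{area}(K)\geq 3$ (then planar Minkowski gives $\mu_1(K,\Lambda_0)^2\leq 4/3$). You acknowledge only getting $4/3$ and defer the rest to an unspecified ``planar critical-lattice argument''. That is not how the gap closes: the inequality $\operatorname{area}(\Pi\cap\vec v^\perp)\geq 3|\vec v|$ for every $\vec v\in\Pi^\ast$ is equivalent to the inclusion $\Pi^\ast\subset\tfrac43\,\Pi^\wedge$, hence (via the cofactor reduction of Lemma~\ref{l:vertex_d_dim}) to $\cB_3\subset\tfrac43\,\cB_3^\wedge$. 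The paper establishes this by the explicit cube-section formula of Lemma~\ref{l:section_area} together with the convexity/coordinate-symmetry reduction to the single vertex check $\pmb\tau'=(2/\sqrt3,2/\sqrt3,2/\sqrt3)$ of Lemma~\ref{l:two_points_on_the_boundary}. No lattice argument enters.

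More seriously, your route cannot reach $\mu_2\leq 5/4$. Even granting the optimal $\operatorname{area}(K)\geq 3$, planar Minkowski yields only $\mu_1(K,\Lambda_0)\,\mu_2(K,\Lambda_0)\leq 4/3$, hence (using $\mu_1(K,\Lambda_0)>1$) $\mu_2(\Pi,\Z^3)\leq\mu_2(K,\Lambda_0)<4/3$; a single isotropic slice carries no further information, and the vague coupling with $\mu_3$ via Theorem~\ref{t:mahler_koerper} does not tighten $4/3$ to $5/4$. The paper's mechanism here is essentially different: having located the first short vector $\vec v$ on a specific facet of $t_1\Pi$ (with $t_1=\mu_1>1$), it dilates $\Pi$ \emph{anisotropically}, freezing the $\vec v$-axis at $t_1$ while expanding the other two axes to $t_2$, and invokes Corollary~\ref{cor:vertex_d_dim} with the \emph{second} boundary point $\pmb\tau''=(1,5/4,5/4)$ of $\cB_3^\wedge$ from Lemma~\ref{l:two_points_on_the_boundary}. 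This anisotropic step is the missing idea. (For sharpness, note also that $\Pi$ must remain a parallelepiped, so ``a prism over a regular hexagon'' is not admissible as stated; the paper produces explicit lattices in Lemma~\ref{l:example_points}.)
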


\begin{remark} \label{rem:reformulatability}
  Theorems \ref{t:all_minima}, \ref{t:2_minima_case_d}, \ref{t:2_minima_case_3} can be formulated in the likeness of Theorem \ref{t:parallelemahler_La}. Then we should substitute $\mu_1(\Pi,\Z^d)$ with $\mu_1(\Pi,\La)$, and $\mu_1(\Pi^\ast,\Z^d)$ with $\mu_1(\Pi^\ast,\La^\ast)$.
\end{remark}

Theorems \ref{t:2_minima_case_d} and \ref{t:2_minima_case_3} will be obtained as a consequence of an observation which is actually a family of transference theorems.

\section{A family of transference theorems} \label{sec:family}

Roughly speaking, regular transference theorems claim the existence of a lattice point in a set provided there is a lattice point in some other set. We are going to construct a whole \emph{family} of parallelepipeds such that each of them will contain a lattice point.

Let $\Pi$ be an arbitrary $\vec 0$-symmetric parallelepiped in $\R^d$. Then there is an operator $\upA_\Pi\in\Gl_d(\R)$ such that $\upA_\Pi\Pi=[-1,1]^d$. For each $d$-tuple $\pmb\tau=(\tau_1,\ldots,\tau_d)\in\R_{>0}^d$ we set
\[ \upH_{\pmb\tau,\Pi}=\upA_\Pi^{-1}
   \begin{pmatrix}
     \tau_1 & 0 & \cdots & 0 \\
     0 & \tau_2 & \cdots & 0 \\
     \vdots & \vdots & \ddots & \vdots \\
     0 & 0 & \cdots & \tau_d
   \end{pmatrix}
   \upA_\Pi. \]
That is $\upH_{\pmb\tau,\Pi}$ is a composition of a hyperbolic shift and a homothety, and the axes of this hyperbolic shift coincide with those of $\Pi$. When clear form the context which parallelepiped is under consideration, we shall write $\upH_{\pmb\tau}$ instead of $\upH_{\pmb\tau,\Pi}$.

\begin{theorem} \label{t:family}
  Let $\Pi$ be an arbitrary $\vec 0$-symmetric parallelepiped in $\R^d$. Then for each $d$-tuple $\pmb\tau=(\tau_1,\ldots,\tau_d)$ such that
  \begin{equation} \label{eq:sum_prod}
    \sum_{i=1}^d\tau_i^2=\prod_{i=1}^d\tau_i^2
  \end{equation}
  we have
  \[ \mu_1(\Pi^\ast,\Z^d)\leq1\implies\mu_1(\upH_{\pmb\tau}\Pi,\Z^d)\leq 1. \]
\end{theorem}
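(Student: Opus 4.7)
The plan is to extract a primitive lattice vector $\vec m\in\Z^d\cap\Pi^\ast$ from the hypothesis $\mu_1(\Pi^\ast,\Z^d)\leq 1$ and then to find the required point of $\upH_{\pmb\tau}\Pi\cap\Z^d$ inside the hyperplane $\vec m^\perp$. Normalizing so that $\Pi=\{\vec z\in\R^d:|h_i(\vec z)|\leq\eta_i\}$ with $\det H=1$, the condition $\vec m\in\Pi^\ast$ reads $|h_i^\ast(\vec m)|\leq(\prod_k\eta_k)/\eta_i$ for every $i$. The sublattice $L=\Z^d\cap\vec m^\perp$ has rank $d-1$ and covolume $\|\vec m\|$ in $\vec m^\perp$ (as $\vec m$ is primitive), so by Minkowski's first theorem applied inside $\vec m^\perp$ it suffices to establish
\[
\vol_{d-1}\!\big(\upH_{\pmb\tau}\Pi\cap\vec m^\perp\big)\geq 2^{d-1}\|\vec m\|.
\]

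To estimate this cross-sectional volume I pass to the unit cube via the linear isomorphism $T:\vec z\mapsto(h_j(\vec z)/(\tau_j\eta_j))_j$, which sends $\upH_{\pmb\tau}\Pi$ onto $[-1,1]^d$ and has $|\det T|=1/\prod_j\tau_j\eta_j$. The hyperplane $\vec m^\perp$ is carried to $\vec m'^\perp$, where $\vec m'=\tr{(T^{-1})}\vec m$ has coordinates $m'_i=\tau_i\eta_i\,h_i^\ast(\vec m)$. The standard Jacobian formula for the restriction of $T$ to a hyperplane then gives
\[
\vol_{d-1}\!\big(\upH_{\pmb\tau}\Pi\cap\vec m^\perp\big)=\vol_{d-1}\!\big([-1,1]^d\cap\vec m'^\perp\big)\cdot\frac{\|\vec m\|\prod_j\tau_j\eta_j}{\|\vec m'\|}.
\]
Vaaler's cube slicing inequality (every central hyperplane section of $[-1,1]^d$ has $(d-1)$-volume at least $2^{d-1}$) then reduces matters to showing $\|\vec m'\|\leq\prod_j\tau_j\eta_j$.

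Substituting $|h_i^\ast(\vec m)|\leq(\prod_k\eta_k)/\eta_i$ yields
\[
\|\vec m'\|^2=\sum_i\tau_i^2\eta_i^2\,h_i^\ast(\vec m)^2\leq\Big(\prod_k\eta_k\Big)^{\!2}\sum_i\tau_i^2,
\]
and the required inequality becomes $\sum_i\tau_i^2\leq\prod_i\tau_i^2$, which is exactly the hypothesis \eqref{eq:sum_prod}. The main delicate input is Vaaler's sharp cube slicing bound: a Brunn-type estimate would only give $\vol_{d-1}([-1,1]^d\cap\vec v^\perp)\geq 2^{d-1}\|\vec v\|/\sum_i|v_i|$, which would replace the $\ell_2$ condition \eqref{eq:sum_prod} by the strictly stronger $\ell_1$ condition $\sum_i\tau_i\leq\prod_i\tau_i$ and would not describe the full family of admissible scalings $\pmb\tau$ claimed in the theorem.
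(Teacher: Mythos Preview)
Your proof is correct and is essentially the paper's argument carried out directly: the paper packages the same two ingredients---Minkowski's theorem applied to the sublattice $\Z^d\cap\vec m^\perp$ (their Lemma~\ref{l:wedge_properties}\ref{list:wedge_mu}) and Vaaler's cube-slicing inequality---inside the formalism of the \emph{section-dual body} $\Pi^\wedge$, proving $\Pi^\ast\subset(\upH_{\pmb\tau}\Pi)^\wedge$ and then invoking $\mu_1(\Pi^\wedge,\Z^d)\le1\Rightarrow\mu_1(\Pi,\Z^d)\le1$, whereas you simply unwind that abstraction and compute $\vol_{d-1}(\upH_{\pmb\tau}\Pi\cap\vec m^\perp)$ by hand. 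The mathematical content is identical.
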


\begin{remark} \label{rem:projectau}
   For each tuple $(\tau_1,\ldots,\tau_d)\in\R_{>0}^d$ there is a unique $\lambda>0$ such that the tuple $(\lambda\tau_1,\ldots,\lambda\tau_d)$ satisfies relation \eqref{eq:sum_prod}.
%   Для каждого набора $(\tau_1,\tau_2,\ldots,\tau_d)\in\R_{>0}^d$ существует ровно одно такое $\lambda>0$ и ровно одно такое $\mu>0$, что наборы $(\lambda\tau_1,\lambda\tau_2,\ldots,\lambda\tau_d)$ и $(\tau_1,\mu\tau_2,\ldots,\mu\tau_d)$ удовлетворяют соотношению \eqref{eq:sum_prod}.
\end{remark}

We now show how to derive Theorem \ref{t:2_minima_case_d} from Theorem \ref{t:family}.

Suppose $\mu_1(\Pi^\ast,\Z^d)\leq1$ and $\mu_1(\Pi,\Z^d)>1$. Then by Theorem \ref{t:family} for each $\pmb\tau$ satisfying \eqref{eq:sum_prod} the parallelepiped $\upH_{\pmb\tau}\Pi$ contains a nonzero point of $\Z^d$. Consider the minimal $t_1$ such that for $\pmb\tau_1=(t_1,\ldots,t_1)$ the parallelepiped $\upH_{\pmb\tau_1}\Pi$ contains a nonzero point of $\Z^d$. Then
\[ t_1=\mu_1(\Pi,\Z^d)>1. \]

There are no nonzero integer points in the interior of $\upH_{\pmb\tau_1}\Pi$, but there is such a point on its boundary. Let us denote it by $\vec v$. Without loss of generality we may suppose $\vec v$ belongs to the facet intersecting the ``first'' axis of $\Pi$, i.e. the one which is mapped onto the first coordinate axis under the action of $\upA_\Pi$ (under this action $\Pi$ turns into $[-1,1]^d$). Consider the minimal $t_2\geq t_1$ such that for $\pmb\tau_2=(t_1,t_2,\ldots,t_2)$ the parallelepiped $\upH_{\pmb\tau_2}\Pi$ contains a nonzero point of $\Z^d$ different from $\pm\vec v$. This new point is linearly independent with $\vec v$, whence
\[ \mu_2(\Pi,\Z^d)\leq t_2. \]
If $t_2$ is strictly larger than the positive root of the equation
\begin{equation} \label{eq:t_1_t}
  t_1^2t^{2(d-1)}=t_1^2+(d-1)t^2,
\end{equation}
then by Remark \ref{rem:projectau} the interior of $\upH_{\pmb\tau_2}\Pi$ contains a parallelepiped $\upH_{\pmb\tau}\Pi$ (homothetic to $\upH_{\pmb\tau_2}\Pi$) with $\pmb\tau$ satisfying \eqref{eq:sum_prod}. But this $\upH_{\pmb\tau}\Pi$ does not contain any nonzero integer point, since there are no such points in the interior of $\upH_{\pmb\tau_2}\Pi$. This contradicts Theorem \ref{t:family} and, therefore, $t_2$ does not exceed the positive root of \eqref{eq:t_1_t}. Observe that this root decreases as $t_1$ grows, and by our assumption $t_1>1$. Hence $t_2$, as well as $\mu_2(\Pi,\Z^d)$, does not exceed the positive root of the polynomial $t^{2(d-1)}-(d-1)t^2-1$.

Thus, Theorem \ref{t:2_minima_case_d} indeed follows from Theorem \ref{t:family}. Theorem \ref{t:family} itself will be proved in Section \ref{sec:family_proof}.

\section{Main tool: section-dual bodies}

Here we describe the main construction which allows proving Theorems \ref{t:family} and \ref{t:2_minima_case_3}.

Let $\Pi$ be an arbitrary $\vec 0$-symmetric parallelepiped in $\R^d$. For each $\vec e\in\R^d$ we shall use $\vol_{\vec e}(\Pi)$ to denote the $(d-1)$-dimensional volume of the intersection of $\Pi$ with the orthogonal complement to $\R\vec e$. We shall also use $\cS^{d-1}$ to denote the (Euclidean) unit sphere in $\R^d$.

\begin{definition}
  The set
  \[ \Pi^\wedge=\{\, \lambda\vec e\ |\ \vec e\in\cS^{d-1},\ 0\leq\lambda\leq2^{1-d}\vol_{\vec e}(\Pi)\, \} \]
  is called \emph{section-dual} for $\Pi$.
\end{definition}

As a separate concept section-dual bodies were apparently considered first by Lutwak \cite{lutwak}. However in his definition there is no factor like $2^{1-d}$ and he used the term ``intersection body''. For us the factor $2^{1-d}$ is apt from the point of view of Minkowski's two theorems: convex body theorem we use to prove statement \ref{list:wedge_mu} of Lemma \ref{l:wedge_properties}, and theorem on consecutive minima we use to prove Lemma \ref{l:wedge_minkowski} (see below).

The following statement is a particular case of the classical Busemann theorem (see \cite{busemann}).

\begin{proposition}
  $\Pi^\wedge$ is convex and $\vec 0$-symmetric.
\end{proposition}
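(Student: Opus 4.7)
The plan is to get central symmetry for free from the definition and to obtain convexity as an immediate application of Busemann's theorem, which is exactly what the paper signals.

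For the symmetry, observe that for every $\vec e\in\cS^{d-1}$ the orthogonal complement to $\R\vec e$ coincides with the orthogonal complement to $\R(-\vec e)$, so $\vol_{\vec e}(\Pi)=\vol_{-\vec e}(\Pi)$. Consequently, if $\lambda\vec e\in\Pi^\wedge$ with $\vec e\in\cS^{d-1}$ and $0\leq\lambda\leq 2^{1-d}\vol_{\vec e}(\Pi)$, then writing $-\lambda\vec e=\lambda(-\vec e)$ gives a point whose radial coordinate satisfies the same bound, hence $-\Pi^\wedge=\Pi^\wedge$.

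For the convexity, recall Busemann's theorem: if $K\subset\R^d$ is a centrally symmetric convex body, then the function on $\cS^{d-1}$ sending $\vec e$ to $\vol_{d-1}(K\cap\vec e^\perp)$ is the radial function of an origin-symmetric convex body in $\R^d$. This is the content of what is now usually called the intersection body construction. The parallelepiped $\Pi$ is certainly an origin-symmetric convex body, so Busemann's theorem applies directly with $K=\Pi$, giving that the radial function $\vec e\mapsto\vol_{\vec e}(\Pi)$ defines a convex body. Multiplying a radial function by the positive constant $2^{1-d}$ amounts to a homothety, which preserves convexity and central symmetry, so $\Pi^\wedge$ is itself convex and $\vec 0$-symmetric.

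The only genuinely nontrivial input is Busemann's theorem, whose proof rests on applying the Brunn--Minkowski inequality to parallel sections of $K$ to show that the reciprocal of the sectional volume is a sublinear function in the direction $\vec e$; since this is classical and the statement is invoked as a known tool (the paper cites \cite{busemann}), the main obstacle here is conceptual rather than computational, and no further work is required beyond identifying $\Pi^\wedge$ with a scalar multiple of the Busemann intersection body of $\Pi$.
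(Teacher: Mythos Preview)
Your argument is correct and matches the paper's approach exactly: the paper offers no independent proof but simply cites Busemann's theorem, and your proposal does the same while spelling out the easy symmetry and the homothety by $2^{1-d}$. There is nothing to add.
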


In \cite{german_JNT} the following properties of section-dual sets are proved.

\begin{lemma}[see \cite{german_JNT}] \label{l:wedge_properties}
  % Пусть $\Pi$ --- произвольный параллелепипед в $\R^d$ с центром в точке начала координат. Тогда
  \indent
  \begin{enumerate}
    \item $\mu_1(\Pi^\wedge,\Z^d)\leq1\implies\mu_1(\Pi,\Z^d)\leq1$.
        \label{list:wedge_mu}
    \item Let $\upA\in\Gl_d(\R)$. Then $(\upA\Pi)^\wedge=\upA'(\Pi^\wedge)$, where $\upA'$ is the cofactor matrix of $\upA$, i.e. $\upA'=(\det\upA)(\upA^\ast)^{-1}$.
        \label{list:wedge_A}
  \end{enumerate}
%  $(\cB_\infty^d)^\wedge$ contains the cube $\Delta_d\cB_\infty^d$.
\end{lemma}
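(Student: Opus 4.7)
The plan is to treat the two claims of the lemma separately.

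For (1), the key point is that the factor $2^{1-d}$ in the definition of $\Pi^\wedge$ is calibrated exactly so that Minkowski's convex body theorem applies in codimension one. Pick any $\vec v\in\Z^d\setminus\{\vec 0\}$ with $\vec v\in\Pi^\wedge$; the definition unpacks to $\vol_{d-1}(\Pi\cap\vec v^\perp)\geq 2^{d-1}|\vec v|$. Factor $\vec v=k\vec v_0$ with $\vec v_0$ primitive and $k\in\N$. A short-exact-sequence argument, choosing any $\vec c\in\Z^d$ with $\langle\vec c,\vec v_0\rangle=1$ (whose image in the one-dimensional quotient $\R^d/\vec v_0^\perp$ has length $1/|\vec v_0|$), identifies $\covol_{\vec v_0^\perp}(\Z^d\cap\vec v_0^\perp)=|\vec v_0|$. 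Apply Minkowski's theorem inside $V:=\vec v^\perp$ to the closed symmetric convex body $K:=\Pi\cap V$ and the lattice $\La_0:=\Z^d\cap V$: the required inequality $\vol_{d-1}(K)\geq 2^{d-1}\covol_V(\La_0)$ reduces to $k|\vec v_0|\geq|\vec v_0|$, which is automatic. Hence $K\cap\La_0$ contains a nonzero point, which is already a nonzero element of $\Z^d\cap\Pi$, giving $\mu_1(\Pi,\Z^d)\leq 1$.

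For (2), I would verify the identity by computing the radial function of both sides in an arbitrary direction $\vec e\in\cS^{d-1}$. Setting $\vec f:=\upA^\ast\vec e/|\upA^\ast\vec e|$, the relation $\upA^{-1}(\vec e^\perp)=\vec f^\perp$ gives $(\upA\Pi)\cap\vec e^\perp=\upA(\Pi\cap\vec f^\perp)$. Computing $\det\upA$ in an orthonormal frame that adjoins $\vec f$ to a basis of $\vec f^\perp$, and observing that $\langle\upA\vec f,\vec e\rangle=|\upA^\ast\vec e|$, one obtains the Jacobian identity
\[ \vol_{d-1}(\upA X)=\frac{|\det\upA|}{|\upA^\ast\vec e|}\vol_{d-1}(X),\qquad X\subset\vec f^\perp, \]
so $\vol_{\vec e}(\upA\Pi)=(|\det\upA|/|\upA^\ast\vec e|)\vol_{\vec f}(\Pi)$. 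On the other hand $(\upA')^{-1}=(\det\upA)^{-1}\upA^\ast$, hence the preimage $(\upA')^{-1}(r\vec e)$ is a vector of length $r|\upA^\ast\vec e|/|\det\upA|$ in direction $\pm\vec f$; inserting this into the condition $(\upA')^{-1}(r\vec e)\in\Pi^\wedge$ gives precisely $r\leq 2^{1-d}(|\det\upA|/|\upA^\ast\vec e|)\vol_{\vec f}(\Pi)$, which matches the radial function of $(\upA\Pi)^\wedge$ at $\vec e$. Thus $\upA'(\Pi^\wedge)=(\upA\Pi)^\wedge$.

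Neither argument is deep; the real effort is bookkeeping. For (1) the delicate point is the non-strict form of Minkowski's theorem in the boundary case $\vec v=\pm\vec e_i$, which is valid because $\Pi\cap V$ is closed and bounded. For (2) one must track the absolute values carefully and recognize that the identity naturally involves the cofactor matrix $\upA'=(\det\upA)(\upA^\ast)^{-1}$, rather than $(\upA^\ast)^{-1}$ alone, precisely because the Jacobian formula contributes the extra factor $|\det\upA|$.
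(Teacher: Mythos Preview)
Your argument is correct. Note, however, that the paper does not actually prove this lemma: it is quoted from \cite{german_JNT} without proof, so there is no ``paper's own proof'' to compare against directly. That said, your treatment of part~\ref{list:wedge_mu} is exactly the mechanism the paper uses a few lines later to prove the stronger Lemma~\ref{l:wedge_minkowski}: pick a primitive integer vector $\vec v\in\Pi^\wedge$, identify $\det(\Z^d\cap\vec v^\perp)=|\vec v|$, and apply Minkowski in the hyperplane $\vec v^\perp$. (The paper simply assumes $\vec v$ primitive from the start, which is legitimate since $\Pi^\wedge$ is star-shaped; your reduction via $\vec v=k\vec v_0$ achieves the same thing.) Your remark about ``the boundary case $\vec v=\pm\vec e_i$'' is a bit obscure---nothing special happens for coordinate directions; the genuine delicacy is only the non-strict Minkowski inequality for compact $K$, which you correctly invoke.

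For part~\ref{list:wedge_A}, your radial-function computation is the standard route and is carried out cleanly; the block-triangular determinant argument giving $\vol_{d-1}(\upA X)=\dfrac{|\det\upA|}{|\upA^\ast\vec e|}\vol_{d-1}(X)$ for $X\subset\vec f^\perp$ is exactly what is needed, and your observation that the factor $|\det\upA|$ is what forces the cofactor matrix $\upA'$ (rather than $(\upA^\ast)^{-1}$) is the right structural point.
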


Statement \ref{list:wedge_mu} of Lemma \ref{l:wedge_properties} gives us a hint about how to prove Theorem \ref{t:family}. It suffices to show that for each $\vec 0$-symmetric parallelepiped $\Pi$ and each $\pmb\tau=(\tau_1,\ldots,\tau_d)$ satisfying \eqref{eq:sum_prod} we have
\begin{equation} \label{eq:inscribing_Pi_ast}
  \Pi^\ast\subset(\upH_{\pmb\tau}\Pi)^\wedge.
\end{equation}

However, to prove Theorem \ref{t:all_minima} we shall need an enhanced version of statement \ref{list:wedge_mu} of Lemma \ref{l:wedge_properties}.

\begin{lemma} \label{l:wedge_minkowski}
  Let $\Pi$ be an arbitrary $\vec 0$-symmetric parallelepiped in $\R^d$. Then
  \[ \mu_1(\Pi^\wedge,\Z^d)\leq1\implies\prod_{k=1}^{d-1}\mu_k(\Pi,\Z^d)\leq1. \]
\end{lemma}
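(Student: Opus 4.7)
The plan is to reduce the statement to a $(d-1)$-dimensional instance of Minkowski's theorem on consecutive minima applied to a cross-section of $\Pi$. First I would pick a nonzero $\vec v\in\Pi^\wedge\cap\Z^d$ (which exists since $\Pi^\wedge$ is closed, convex and $\vec 0$-symmetric, and $\mu_1(\Pi^\wedge,\Z^d)\leq 1$), and using the star-shapedness of $\Pi^\wedge$ with respect to $\vec 0$ I may assume $\vec v$ is primitive in $\Z^d$. Setting $\vec e=\vec v/|\vec v|$ and $H=\vec v^\perp$, the definition of the section-dual body gives $|\vec v|\leq 2^{1-d}\vol_{\vec e}(\Pi)$, which rearranges to
\[ \vol_{d-1}(\Pi\cap H)\geq 2^{d-1}|\vec v|. \]

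Next, I need the fact that the sublattice $\La_H:=\Z^d\cap H$ has rank $d-1$ (since $H$ is rational) and covolume $|\vec v|$ in $H$. This is a short linear-algebra calculation: extend $\vec v$ to a $\Z^d$-basis $\vec v,\vec w_2,\ldots,\vec w_d$, project $\vec w_2,\ldots,\vec w_d$ orthogonally onto $H$, and use the identity $|\det(\vec v,\vec w_2,\ldots,\vec w_d)|=1$ together with primitivity of $\vec v$ to compute the index of $\Z^d\cap H$ inside the projection $\pi(\Z^d)$. With this in hand, I apply Minkowski's second theorem inside $H$ to the $\vec 0$-symmetric convex body $S:=\Pi\cap H$ and the lattice $\La_H$:
\[ \prod_{k=1}^{d-1}\mu_k(S,\La_H)\cdot\vol_{d-1}(S)\leq 2^{d-1}\covol(\La_H)=2^{d-1}|\vec v|, \]
and combining this with the cross-section estimate gives $\prod_{k=1}^{d-1}\mu_k(S,\La_H)\leq 1$.

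To conclude, I observe that $S\subset\Pi$ and $\La_H\subset\Z^d$, so any family of $k$ linearly independent $\La_H$-points inside $\mu S$ is automatically a family of $k$ linearly independent $\Z^d$-points inside $\mu\Pi$; hence $\mu_k(\Pi,\Z^d)\leq\mu_k(S,\La_H)$ for every $k$, and the required inequality follows. The only step that needs any real care is the covolume identity $\covol(\La_H)=|\vec v|$; once this is in place, the argument is a clean dimension reduction, and the pleasing feature is that the normalising factor $2^{1-d}$ built into the definition of $\Pi^\wedge$ matches precisely the Minkowski constant $2^{d-1}$ arising on the right-hand side in dimension $d-1$.
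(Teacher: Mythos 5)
Your proposal is correct and follows essentially the same path as the paper's proof: pass to a primitive $\vec v\in\Pi^\wedge\cap\Z^d$, interpret membership in $\Pi^\wedge$ as the cross-sectional volume bound $\vol_{d-1}(\Pi\cap\vec v^\perp)\geq 2^{d-1}|\vec v|$, identify $\covol(\Z^d\cap\vec v^\perp)=|\vec v|$, apply Minkowski's second theorem in the hyperplane $\vec v^\perp$, and then use the trivial monotonicity $\mu_k(\Pi,\Z^d)\leq\mu_k(\Pi\cap\vec v^\perp,\Z^d\cap\vec v^\perp)$. The only cosmetic difference is in how the covolume identity is justified: the paper invokes the fact that the cross product of any basis of $\Z^d\cap\vec v^\perp$ equals $\pm\vec v$, whereas you propose extending $\vec v$ to a $\Z^d$-basis and tracking the determinant through the orthogonal projection onto $\vec v^\perp$; these are equivalent and both routine.
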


\begin{proof}
  Suppose $\mu_1(\Pi^\wedge,\Z^d)\leq1$. Then there is a (nonzero) primitive integer point $\vec v$ in $\Pi^\wedge$. By the definition of section-dual set this means that
  \[ \vol_{\vec v}(\Pi)\geq2^{d-1}|\vec v|. \]
  Consider the $(d-1)$-dimensional subspace $\cL_{\vec v}$ orthogonal to $\vec v$ and set % $\La_{\vec v}=\Z^d\cap\cL_{\vec v}$.
  \[ S_{\vec v}=\Pi\cap\cL_{\vec v},\quad\La_{\vec v}=\Z^d\cap\cL_{\vec v}. \]
  Then, up to sign, $\vec v$ coincides with the cross product of any $d-1$ vectors which make a basis of $\La_{\vec v}$. Hence
  \[ \det\La_{\vec v}=|\vec v|\leq2^{1-d}\vol_{\vec v}(\Pi)=2^{1-d}\vol(S_{\vec v}). \]
  Applying Minkowski's theorem on consecutive minima we get
  \[ \prod_{k=1}^{d-1}\mu_k(\Pi,\Z^d)\leq\prod_{k=1}^{d-1}\mu_k(S_{\vec v},\La_{\vec v})\leq
     \frac{2^{d-1}\det\La_{\vec v}}{\vol(S_{\vec v})}\leq1. \]
\end{proof}

\section{Section-dual for unit cube}

Set $\cB_d=[-1,1]^d$. In other words $\cB_d$ is the unit ball in sup-norm. Due to Vaaler's theorem (see \cite{vaaler}) the volume of any $(d-1)$-dimensional central section of $\cB_d$ is not less than $2^{d-1}$. Hence $\cB_d^\wedge$ contains a Euclidean ball of radius $1$, and we get the following statement.

\begin{lemma} \label{l:cube_wedge}
  $\cB_d^\ast=\cB_d\subset\sqrt d\,\cB_d^\wedge$.
\end{lemma}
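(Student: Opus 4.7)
The plan is to verify the two assertions of the lemma separately, both essentially by unwinding definitions.

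For the identity $\cB_d^\ast=\cB_d$, I would apply the definition of the pseudo-compound directly. Realize $\cB_d$ as the parallelepiped associated with the coordinate forms $h_i(\vec z)=z_i$ and the parameters $\eta_i=1$; the coefficient matrix is $H=I$ with $\det H=1$, as required. Its dual system is $h_i^\ast(\vec z)=z_i$, and the pseudo-compound constraints $|h_i^\ast(\vec z)|\leq \eta_i^{-1}\prod_{j=1}^d\eta_j$ reduce to $|z_i|\leq 1$. Hence $\cB_d^\ast=\cB_d$.

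For the inclusion $\cB_d\subset\sqrt d\,\cB_d^\wedge$, the key input is Vaaler's theorem, quoted just above the lemma: for every unit vector $\vec e\in\cS^{d-1}$ the central hyperplane section of $\cB_d$ perpendicular to $\vec e$ has $(d-1)$-volume $\vol_{\vec e}(\cB_d)\geq 2^{d-1}$. Inserting this into the definition of $\cB_d^\wedge$ shows that $\lambda\vec e\in\cB_d^\wedge$ whenever $\vec e\in\cS^{d-1}$ and $0\leq\lambda\leq 1$; that is, the Euclidean unit ball $\{\vec z\in\R^d\mid|\vec z|\leq 1\}$ is contained in $\cB_d^\wedge$.

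To finish, I would use the comparison between the sup-norm and the Euclidean norm on $\R^d$: if $\vec x=(x_1,\ldots,x_d)\in\cB_d$, then $|x_i|\leq 1$ for all $i$, so
\[
   |\vec x|=\Big(\sum_{i=1}^d x_i^2\Big)^{1/2}\leq\sqrt d,
\]
which means $\vec x\in\sqrt d\,\{\vec z\in\R^d\mid|\vec z|\leq 1\}\subset\sqrt d\,\cB_d^\wedge$. Combining the two steps yields the lemma. There is no real obstacle here: the content of the argument is concentrated in Vaaler's lower bound on central sections of the cube, which is invoked as a black box, after which the inclusion is simply the inequality $|\vec x|\leq\sqrt d\,|\vec x|_\infty$.
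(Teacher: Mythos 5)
Your proposal is correct and follows exactly the paper's intended argument: the equality $\cB_d^\ast=\cB_d$ is a direct unwinding of the pseudo-compound definition with $h_i(\vec z)=z_i$ and $\eta_i=1$, and the inclusion combines Vaaler's lower bound $\vol_{\vec e}(\cB_d)\geq 2^{d-1}$ (hence the Euclidean unit ball lies in $\cB_d^\wedge$) with the elementary comparison $|\vec x|\leq\sqrt d\,|\vec x|_\infty$. Nothing is missing and nothing is different in substance from what the paper does in the two sentences preceding the lemma.
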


\begin{corollary} \label{cor:para_wedge}
  For each $\vec 0$-symmetric parallelepiped $\Pi$ we have
  \[ \Pi^\ast\subset\sqrt d\,\Pi^\wedge. \]
\end{corollary}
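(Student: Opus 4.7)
My plan is to reduce the corollary to Lemma~\ref{l:cube_wedge} by an equivariance argument. Every $\vec 0$-symmetric parallelepiped can be written as $\Pi = \upA\cB_d$ for some $\upA \in \Gl_d(\R)$, so I want to show that both dualities $\Pi \mapsto \Pi^\ast$ and $\Pi \mapsto \Pi^\wedge$ are carried from $\cB_d$ to $\Pi$ by the \emph{same} linear operator, namely the cofactor matrix $\upA' = (\det\upA)(\upA^\ast)^{-1}$. Once this is in place, applying $\upA'$ to the inclusion $\cB_d \subset \sqrt{d}\,\cB_d^\wedge$ furnished by Lemma~\ref{l:cube_wedge} immediately gives $\Pi^\ast \subset \sqrt{d}\,\Pi^\wedge$.

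For the section-dual the required equivariance $(\upA\cB_d)^\wedge = \upA'\cB_d^\wedge$ is precisely statement~\ref{list:wedge_A} of Lemma~\ref{l:wedge_properties}. For the pseudo-compound the excerpt records $(\upA\Pi)^\ast = (\upA^\ast)^{-1}\Pi^\ast$ only for $\upA \in \Sl_d(\R)$, so I would upgrade this to $\Gl_d(\R)$ by combining that identity with the scaling law $(c\Pi)^\ast = c^{d-1}\Pi^\ast$, which is immediate from the definition: multiplying each $\eta_i$ by $c$ multiplies each bound $\eta_i^{-1}\prod_j\eta_j$ by $c^{d-1}$. Using the factorisation $\upA = (\det\upA)^{1/d}\tilde\upA$ with $\tilde\upA \in \Sl_d(\R)$, a short computation then yields $(\upA\cB_d)^\ast = \upA'\cB_d^\ast$, and since $\cB_d^\ast = \cB_d$ this equals $\upA'\cB_d$.

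Assembling the pieces, the corollary follows from the chain
\[ \Pi^\ast \;=\; \upA'\cB_d \;\subset\; \sqrt{d}\,\upA'\cB_d^\wedge \;=\; \sqrt{d}\,\Pi^\wedge, \]
where the middle inclusion is Lemma~\ref{l:cube_wedge} pushed forward by $\upA'$. There is no substantive obstacle here; the only thing requiring care is bookkeeping the determinant factors so that a single operator $\upA'$ governs both dualities. This compatibility is precisely what makes the section-dual a natural companion to the pseudo-compound, and it is what turns the concrete inequality of Lemma~\ref{l:cube_wedge} into a statement about arbitrary $\vec 0$-symmetric parallelepipeds.
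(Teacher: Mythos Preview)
Your proposal is correct and follows essentially the same route as the paper: write $\Pi=\upA\cB_d$, use statement~\ref{list:wedge_A} of Lemma~\ref{l:wedge_properties} for the section-dual, the identity $(\upA\cB_d)^\ast=\upA'\cB_d^\ast$ for the pseudo-compound, and then push Lemma~\ref{l:cube_wedge} forward by $\upA'$. The paper's proof is exactly your final displayed chain; you are in fact slightly more careful, since the paper simply writes $(\upA\cB_d)^\ast=\upA'\cB_d^\ast$ for $\upA\in\Gl_d(\R)$ without comment, whereas you justify the passage from $\Sl_d(\R)$ to $\Gl_d(\R)$ via the scaling law $(c\Pi)^\ast=c^{d-1}\Pi^\ast$.
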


\begin{proof}
  Consider $\upA\in\Gl_d(\R)$ such that $\Pi=\upA\cB_d$. Then by Lemma \ref{l:cube_wedge} and statement \ref{list:wedge_A} of Lemma \ref{l:wedge_properties}
  \[ \Pi^\ast=(\upA\cB_d)^\ast=\upA'\cB_d^\ast\subset\upA'(\sqrt d\,\cB_d^\wedge)=\sqrt d\,(\upA\cB_d)^\wedge=\sqrt d\,\Pi^\wedge. \]
\end{proof}

In order to prove Theorems \ref{t:family}, \ref{t:2_minima_case_3}, let us reformulate \eqref{eq:inscribing_Pi_ast} in terms of the properties of $\cB_d^\wedge$.

\begin{lemma} \label{l:vertex_d_dim}
  For each $\vec 0$-symmetric parallelepiped $\Pi$ and each $d$-tuple $\pmb\tau=(\tau_1,\ldots,\tau_d)$ the inclusion \eqref{eq:inscribing_Pi_ast} is equivalent to
  \begin{equation} \label{eq:vertex_in_wedge}
    \bigg(\prod_{i-1}^d\tau_i\bigg)^{-1}
    \begin{pmatrix}
      \tau_1 \\
      \vdots \\
      \tau_d
    \end{pmatrix}
    \in\cB_d^\wedge.
  \end{equation}
\end{lemma}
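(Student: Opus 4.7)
My plan is to reduce the general inclusion \eqref{eq:inscribing_Pi_ast} to the case $\Pi=\cB_d$ via a single change of coordinates, and then recognize the reduced statement as containment of a single vertex in $\cB_d^\wedge$. The key observation is that both $\Pi^\ast$ and $(\upH_{\pmb\tau}\Pi)^\wedge$ transform covariantly under the linear map that takes $\Pi$ to the cube.

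Set $\upB=\upA_\Pi^{-1}$, so that $\Pi=\upB\cB_d$ and, since $\upH_{\pmb\tau}=\upB D_{\pmb\tau}\upB^{-1}$ with $D_{\pmb\tau}=\mathrm{diag}(\tau_1,\ldots,\tau_d)$, also $\upH_{\pmb\tau}\Pi=\upB D_{\pmb\tau}\cB_d$. By statement~\ref{list:wedge_A} of Lemma~\ref{l:wedge_properties} we get $(\upH_{\pmb\tau}\Pi)^\wedge=\upB'(D_{\pmb\tau}\cB_d)^\wedge$. I would next establish the analogous identity for the pseudo-compound, $\Pi^\ast=\upB'\cB_d^\ast=\upB'\cB_d$, directly from its definition: writing the defining forms of $\Pi$ as appropriately normalised rows of $\upB^{-1}$ (rescaled so that the matrix of forms has determinant $1$) and using that their dual basis is given by correspondingly rescaled rows of $\upB^T$, a short unwinding yields $\Pi^\ast=\det(\upB)\upB^{-T}\cB_d=\upB'\cB_d$. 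Since $\upB'$ is invertible, \eqref{eq:inscribing_Pi_ast} is therefore equivalent to $\cB_d\subset(D_{\pmb\tau}\cB_d)^\wedge$.

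Applying statement~\ref{list:wedge_A} once more gives $(D_{\pmb\tau}\cB_d)^\wedge=D_{\pmb\tau}'\cB_d^\wedge=P\,D_{\pmb\tau}^{-1}\cB_d^\wedge$, where $P=\prod_{j=1}^d\tau_j$, so the inclusion reduces further to $P^{-1}D_{\pmb\tau}\cB_d\subset\cB_d^\wedge$. The body on the left is the convex hull of its $2^d$ vertices $(\pm P^{-1}\tau_1,\ldots,\pm P^{-1}\tau_d)$, while $\cB_d^\wedge$ is convex (by the Busemann theorem quoted above) and invariant under every coordinate sign flip, because $\vec e\mapsto\vol_{\vec e}(\cB_d)$ depends only on $|e_1|,\ldots,|e_d|$. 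Containment of one vertex therefore forces containment of all of them and hence, by convexity, of their full convex hull; picking the vertex lying in the positive orthant gives precisely the condition \eqref{eq:vertex_in_wedge}. The only step that is not bookkeeping is the transformation law $\Pi^\ast=\upB'\cB_d$, which is the pseudo-compound analogue of statement~\ref{list:wedge_A} and presents no real difficulty once the normalisation $\det H=1$ is accounted for.
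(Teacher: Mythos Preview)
Your argument is correct and follows essentially the same route as the paper: transport both sides of \eqref{eq:inscribing_Pi_ast} via the cofactor map $\upA'$ (equivalently, your $\upB'$) to reduce to the inclusion $P^{-1}D_{\pmb\tau}\cB_d\subset\cB_d^\wedge$, and then use convexity and coordinate-hyperplane symmetry of $\cB_d^\wedge$ to reduce to a single vertex. The only cosmetic difference is that you work with $\upB=\upA_\Pi^{-1}$ and flag the transformation law $(\upA\Pi)^\ast=\upA'\Pi^\ast$ as needing a brief check, whereas the paper uses it silently (as in the proof of Corollary~\ref{cor:para_wedge}).
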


\begin{proof}
  Consider the same $\upA=\upA_\Pi$ as in Section \ref{sec:family}, so that $\upA\Pi=\cB_d$. Then
  \[ \upA'(\Pi^\ast)=(\upA\Pi)^\ast=\cB_d^\ast=\cB_d \]
  and by statement \ref{list:wedge_A} of Lemma \ref{l:wedge_properties}
  \[ \upA'((\upH_{\pmb\tau}\Pi)^\wedge)=(\upA\upH_{\pmb\tau}\upA^{-1}\upA\Pi)^\wedge=(\upD_{\pmb\tau}\cB_d)^\wedge=\upD_{\pmb\tau}'\cB_d^\wedge, \]
  where
  \[ \upD_{\pmb\tau}=\upA\upH_{\pmb\tau}\upA^{-1}=\upH_{\pmb\tau,\cB_d}=
   \begin{pmatrix}
     \tau_1 & 0 & \cdots & 0 \\
     0 & \tau_2 & \cdots & 0 \\
     \vdots & \vdots & \ddots & \vdots \\
     0 & 0 & \cdots & \tau_d
   \end{pmatrix}. \]
  Hence \eqref{eq:inscribing_Pi_ast} is equivalent to
  \begin{equation*} %\label{eq:inscribing_B}
    \frac1{\det\upD_{\pmb\tau}}\upD_{\pmb\tau}\cB_d\subset\cB_d^\wedge.
  \end{equation*}
  And this inclusion in virtue of convexity and symmetry w.r.t. coordinate hyperplanes of both $\cB_d$ and $\cB_d^\wedge$ is equivalent to the fact that the vertex of
  \[ \frac1{\det\upD_{\pmb\tau}}\upD_{\pmb\tau}\cB_d \]
  with positive coordinates lies in $\cB_d^\wedge$. But this is exactly what \eqref{eq:vertex_in_wedge} states.
\end{proof}

\begin{corollary} \label{cor:vertex_d_dim}
  If $\mu_1(\Pi^\ast,\Z^d)\leq1$, then $\mu_1(\upH_{\pmb\tau}\Pi,\Z^d)\leq1$ for every $\pmb\tau=(\tau_1,\ldots,\tau_d)$ satisfying \eqref{eq:vertex_in_wedge}.
\end{corollary}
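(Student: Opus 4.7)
The plan is to chain together two results already established in the excerpt: the equivalence in Lemma \ref{l:vertex_d_dim} and statement \ref{list:wedge_mu} of Lemma \ref{l:wedge_properties}. Assume $\mu_1(\Pi^\ast,\Z^d)\leq 1$ and let $\pmb\tau$ be any tuple satisfying \eqref{eq:vertex_in_wedge}. By Lemma \ref{l:vertex_d_dim}, relation \eqref{eq:vertex_in_wedge} is equivalent to the inclusion $\Pi^\ast\subset(\upH_{\pmb\tau}\Pi)^\wedge$. Since $\mu_1(\Pi^\ast,\Z^d)\leq 1$ means $\Pi^\ast$ contains a nonzero integer point, this same point lies in $(\upH_{\pmb\tau}\Pi)^\wedge$, giving $\mu_1((\upH_{\pmb\tau}\Pi)^\wedge,\Z^d)\leq 1$.

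Now I would invoke statement \ref{list:wedge_mu} of Lemma \ref{l:wedge_properties} with the $\vec 0$-symmetric parallelepiped $\upH_{\pmb\tau}\Pi$ in place of $\Pi$. This immediately yields $\mu_1(\upH_{\pmb\tau}\Pi,\Z^d)\leq 1$, which is the desired conclusion.

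There is essentially no obstacle here: the corollary is a straightforward composition of the two preceding results, and the only thing worth remarking is that $\upH_{\pmb\tau}\Pi$ is itself a $\vec 0$-symmetric parallelepiped, so Lemma \ref{l:wedge_properties} applies to it without modification. The real content has already been carried out in the proofs of Lemma \ref{l:wedge_properties} and Lemma \ref{l:vertex_d_dim}.
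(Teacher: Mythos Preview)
Your proof is correct and follows essentially the same route as the paper: use Lemma \ref{l:vertex_d_dim} to pass from \eqref{eq:vertex_in_wedge} to the inclusion $\Pi^\ast\subset(\upH_{\pmb\tau}\Pi)^\wedge$, deduce $\mu_1((\upH_{\pmb\tau}\Pi)^\wedge,\Z^d)\leq1$, and then apply statement \ref{list:wedge_mu} of Lemma \ref{l:wedge_properties} to $\upH_{\pmb\tau}\Pi$. Your remark that $\upH_{\pmb\tau}\Pi$ is itself a $\vec 0$-symmetric parallelepiped is the only point the paper leaves implicit.
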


\begin{proof}
  If \eqref{eq:vertex_in_wedge} holds, then by Lemma \ref{l:vertex_d_dim} we also have \eqref{eq:inscribing_Pi_ast}. Taking into account statement \ref{list:wedge_mu} of Lemma \ref{l:wedge_properties} we get the following chain of implications
  \[ \mu_1(\Pi^\ast,\Z^d)\leq1\implies\mu_1((\upH_{\pmb\tau}\Pi)^\wedge,\Z^d)\leq 1\implies\mu_1(\upH_{\pmb\tau}\Pi,\Z^d)\leq 1. \]
\end{proof}

\section{Proof of Theorem \ref{t:all_minima}} \label{sec:al_minima_proof}

Having Lemma \ref{l:wedge_minkowski} and Corollary \ref{cor:para_wedge}, it is quite easy to prove Theorem \ref{t:all_minima}. Indeed, those statements immediately imply the implications
\[ \mu_1(\Pi^\ast,\Z^d)\leq1
   \implies\mu_1(\sqrt d\,\Pi^\wedge,\Z^d)\leq1
   \implies\mu_1\bigg(\bigg(d^{\raisebox{1ex}{$\frac1{2(d-1)}$}}\Pi\bigg)^\wedge,\Z^d\bigg)\leq1
   \implies  \]
\begin{equation} \label{eq:mu_implications}
  \implies\prod_{k=1}^{d-1}\mu_k\bigg(d^{\raisebox{1ex}{$\frac1{2(d-1)}$}}\Pi,\Z^d\bigg)\leq1
  \implies\prod_{k=1}^{d-1}\mu_k(\Pi,\Z^d)\leq\sqrt d.
\end{equation}
Furthermore,
\[ \mu_1(\Pi,\Z^d)\leq\ldots\leq\mu_{d-1}(\Pi,\Z^d), \]
so within the assumption $\mu_1(\Pi,\Z^d)\geq1$ the latter inequality in \eqref{eq:mu_implications} implies that for each $k=1,\ldots,d-1$ we have
\[ \mu_k(\Pi,\Z^d)\leqslant d^{\raisebox{1ex}{$\frac1{2(d-k)}$}}. \]

This proves Theorem \ref{t:all_minima}.

\section{Proof of Theorem \ref{t:family} and its slightly stronger version} \label{sec:family_proof}

As it was said in the previous Section, Vaaler's theorem implies that $\cB_d^\wedge$ contains a Euclidean ball of radius $1$. Suppose $\pmb\tau=(\tau_1,\ldots,\tau_d)$ satisfies \eqref{eq:sum_prod}. Then the Euclidean norm of the point
\begin{equation} \label{eq:vertex}
  \bigg(\prod_{i-1}^d\tau_i\bigg)^{-1}
  \begin{pmatrix}
    \tau_1 \\
    \vdots \\
    \tau_d
  \end{pmatrix}
\end{equation}
is equal to $1$. Hence $\pmb\tau$ satisfies \eqref{eq:vertex_in_wedge}. It remains to apply Corollary \ref{cor:vertex_d_dim}.

Theorem \ref{t:family} is proved.

Theorem \ref{t:family} is not sharp: we lose sharpness at least when we approximate $\cB_d^\wedge$ with the Euclidean unit ball. However, we can confine ourselves with Corollary \ref{cor:vertex_d_dim} and get a stronger statement immediately. Set
\[ v_{\pmb\tau}=2^{1-d}\vol\Big\{ \tr{(z_1\,\ldots\,z_d)}\in\cB_d\ \Big| \sum_{i=1}^d\tau_iz_i=0 \Big\}, \]
i.e. $v_{\pmb\tau}$ is the normalized (in view of Minkowski's theorems) volume of $(d-1)$-dimensional central section of $\cB_d$ orthogonal to $\tr{(\tau_1\,\ldots\,\tau_d)}$. The immediate application of Corollary \ref{cor:vertex_d_dim} gives us the following statement, stronger than Theorem \ref{t:family}.

\begin{theorem} \label{t:family_sharper}
  Let $\Pi$ be an arbitrary $\vec 0$-symmetric parallelepiped in $\R^d$. Then for each $d$-tuple $\pmb\tau=(\tau_1,\ldots,\tau_d)$ such that
  \begin{equation} \label{eq:sum_prod_sharper}
    \sum_{i=1}^d\tau_i^2=v_{\pmb\tau}^2\prod_{i=1}^d\tau_i^2,
  \end{equation}
  we have
  \[ \mu_1(\Pi^\ast,\Z^d)\leq1\implies\mu_1(\upH_{\pmb\tau}\Pi,\Z^d)\leq 1. \]
\end{theorem}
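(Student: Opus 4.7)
The plan is to invoke Corollary \ref{cor:vertex_d_dim} directly, so the whole task reduces to verifying that condition \eqref{eq:sum_prod_sharper} is precisely the condition for the vertex
\[ \vec p=\Big(\prod_{i=1}^d\tau_i\Big)^{-1}\tr{(\tau_1\,\ldots\,\tau_d)} \]
to lie in $\cB_d^\wedge$. Unlike the proof of Theorem \ref{t:family}, where one replaces $\cB_d^\wedge$ by the inscribed Euclidean unit ball (the Vaaler bound), here we exploit the full section-dual body: we compute the exact radius of $\cB_d^\wedge$ in the radial direction of $\vec p$.

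First, I would write $\vec p=\lambda\vec e$, where $\vec e=(\sum_{i=1}^d\tau_i^2)^{-1/2}\tr{(\tau_1\,\ldots\,\tau_d)}\in\cS^{d-1}$ and $\lambda=|\vec p|=(\prod_{i=1}^d\tau_i)^{-1}\bigl(\sum_{i=1}^d\tau_i^2\bigr)^{1/2}$. Next, I would observe that the orthogonal complement of $\R\vec e$ is exactly the hyperplane $\{\vec z\in\R^d:\sum_{i=1}^d\tau_iz_i=0\}$, so by the definitions of $\vol_{\vec e}$ and of $v_{\pmb\tau}$,
\[ 2^{1-d}\vol_{\vec e}(\cB_d)=2^{1-d}\vol\Big\{\tr{(z_1\,\ldots\,z_d)}\in\cB_d\;\Big|\;\sum_{i=1}^d\tau_iz_i=0\Big\}=v_{\pmb\tau}. \]
Therefore, by the very definition of the section-dual body, $\vec p\in\cB_d^\wedge$ is equivalent to $\lambda\le v_{\pmb\tau}$, i.e.
\[ \Big(\prod_{i=1}^d\tau_i\Big)^{-2}\sum_{i=1}^d\tau_i^2\le v_{\pmb\tau}^{\,2}, \]
which in turn is equivalent to $\sum_{i=1}^d\tau_i^2\le v_{\pmb\tau}^{\,2}\prod_{i=1}^d\tau_i^2$.

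Hypothesis \eqref{eq:sum_prod_sharper} asserts that equality holds here, so $\vec p$ lies on the boundary of $\cB_d^\wedge$ and, in particular, $\vec p\in\cB_d^\wedge$. This is exactly condition \eqref{eq:vertex_in_wedge}, and Corollary \ref{cor:vertex_d_dim} then yields the implication
\[ \mu_1(\Pi^\ast,\Z^d)\le1\implies\mu_1(\upH_{\pmb\tau}\Pi,\Z^d)\le1, \]
completing the proof.

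Since all the heavy lifting has already been done in the preceding sections (the definition of $\Pi^\wedge$, Lemma \ref{l:wedge_properties}, Lemma \ref{l:vertex_d_dim}, and Corollary \ref{cor:vertex_d_dim}), there is no real obstacle: the only thing to check is the identification of $2^{1-d}\vol_{\vec e}(\cB_d)$ with $v_{\pmb\tau}$ for the specific direction $\vec e\parallel\tr{(\tau_1\,\ldots\,\tau_d)}$, which is immediate from the definitions. The proof is essentially a bookkeeping consequence of the setup, and the new content relative to Theorem \ref{t:family} is purely that we use the sharp radial value $v_{\pmb\tau}$ instead of the Vaaler lower bound $1$.
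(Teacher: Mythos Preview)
Your proof is correct and follows exactly the paper's approach: the paper simply states that Theorem \ref{t:family_sharper} is ``the immediate application of Corollary \ref{cor:vertex_d_dim}'', and you have unpacked precisely this application by noting that \eqref{eq:sum_prod_sharper} says the point \eqref{eq:vertex} has Euclidean norm equal to $v_{\pmb\tau}$, i.e.\ lies on the boundary of $\cB_d^\wedge$, so \eqref{eq:vertex_in_wedge} holds.
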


However, besides Vaaler's theorem there is also Ball's theorem (see \cite{ball}), which estimates the volume of any $(d-1)$-dimensional central section of $\cB_d$ from above by $2^{d-1}\sqrt2$. Thus, in each dimension and for each $\pmb\tau$ we have
\[ 1\leq v_{\pmb\tau}\leq\sqrt2, \]
and it can be easily seen that both boundaries are attained. This implies that in each dimension the Banach--Mazur distance between the spaces corresponding to $\cB_d^\wedge$ and to the Euclidean unit ball is equal to $\sqrt2$. Hence substituting $v_{\pmb\tau}$ with $1$ does not weaken the statement too much, but it makes it sufficiently simpler, for the dependence of $v_{\pmb\tau}$ on $\pmb\tau$ for arbitrary $d$ is rather complicated.

As for fixed dimensions, for instance, $d=3$, we can use Corollary \ref{cor:vertex_d_dim} (and thus, Theorem \ref{t:family_sharper}) explicitly, without approximating $\cB_d^\wedge$ with a unit ball, and obtain sharp inequalities.

\section{Three-dimensional case. Proof of Theorem \ref{t:2_minima_case_3}}

For $\pmb\tau=(\tau_1,\tau_2,\tau_3)$ let us set
\[ \vec v_{\pmb\tau}=
   \frac1{\tau_1\tau_2\tau_3}
   \begin{pmatrix}
     \tau_1 \\
     \tau_2 \\
     \tau_3
   \end{pmatrix} \]
By the definition of section-dual set the relation \eqref{eq:vertex_in_wedge} means exactly that the Euclidean norm $|\vec v_{\pmb\tau}|$ does not exceed the area of the central section of $\cB_3$ orthogonal to $\vec v_{\pmb\tau}$ divided by four.

The next statement is a simple school geometry exercise.

\begin{lemma} \label{l:section_area}
  Given $0\leq x\leq1$, the area of the central section of $\cB_d$ orthogonal to $\tr{(x\ 1\,\ 1)}$ is equal to $(4-x)\sqrt{2+x^2}$.
\end{lemma}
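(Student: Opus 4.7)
The plan is to realize the section as a graph over the $(z_1,z_2)$-plane and then compute a planar area by elementary means.

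First, for $0\leq x\leq 1$, the hyperplane $\{xz_1+z_2+z_3=0\}$ meets $\cB_3=[-1,1]^3$ exactly in the graph of the linear function $z_3=-xz_1-z_2$ over the planar region
\[ R_x=\big\{(z_1,z_2)\in[-1,1]^2 \,\big|\, |xz_1+z_2|\leq1\big\}. \]
The orthogonal projection $(z_1,z_2,z_3)\mapsto(z_1,z_2)$ onto the coordinate plane is along the $z_3$-axis, and the unit normal to the section is $(x,1,1)/\sqrt{x^2+2}$, whose $z_3$-component equals $1/\sqrt{x^2+2}$. Hence the section's area equals $\sqrt{x^2+2}\cdot\vol(R_x)$, and the whole problem reduces to computing $\vol(R_x)$.

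Next I would compute $\vol(R_x)$ by cutting two triangles from the square $[-1,1]^2$. Under the assumption $0\leq x\leq 1$ the line $xz_1+z_2=1$ enters the square at $(0,1)$ and leaves at $(1,1-x)$, so it slices off the right triangle with vertices $(0,1)$, $(1,1)$, $(1,1-x)$, whose legs have lengths $1$ and $x$ and whose area is therefore $x/2$. Centrally symmetric reasoning shows that the line $xz_1+z_2=-1$ cuts off a congruent triangle near $(-1,-1)$. Therefore
\[ \vol(R_x)=4-2\cdot\tfrac{x}{2}=4-x, \]
and multiplying by $\sqrt{x^2+2}$ yields the desired value $(4-x)\sqrt{2+x^2}$.

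There is no real obstacle here: the only point deserving attention is that the hypothesis $x\leq1$ is precisely what guarantees that $1-x\in[-1,1]$, so that each cut-off piece is a single triangle rather than a more complicated polygon. The boundary case $x=0$ is also consistent, since then $R_0=[-1,1]^2$ has area $4$ and the formula reduces to $4\sqrt{2}$, which agrees with the fact that the section by $\{z_2+z_3=0\}$ is a rectangle with sides $2$ and $2\sqrt{2}$.
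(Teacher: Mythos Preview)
Your proof is correct. The paper itself does not give a proof of this lemma, calling it ``a simple school geometry exercise,'' and your argument --- parametrising the section as the graph $z_3=-xz_1-z_2$ over the hexagonal region $R_x$, computing $\vol(R_x)=4-x$ by removing two congruent triangles, and then multiplying by the stretch factor $\sqrt{2+x^2}$ --- is exactly the kind of elementary computation the authors had in mind.
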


\begin{lemma} \label{l:two_points_on_the_boundary}
  Let $\pmb\tau'=\big(2/\sqrt3,2/\sqrt3,2/\sqrt3\big)$, $\pmb\tau''=\big(1,5/4,5/4\big)$.
  Then the points $\vec v_{\pmb\tau'}$ and $\vec v_{\pmb\tau''}$ lie on the boundary of $\cB_3^\wedge$.
\end{lemma}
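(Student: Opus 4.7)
The plan is to check boundary membership directly from the definition of the section-dual body. In dimension $3$ the formula $\Pi^\wedge=\{\lambda\vec e\mid \vec e\in\cS^2,\ 0\leq\lambda\leq\tfrac14\vol_{\vec e}(\Pi)\}$ says that a point $\vec v\in\R^3$ lies on the boundary of $\cB_3^\wedge$ exactly when $|\vec v|=\tfrac14\vol_{\vec v}(\cB_3)$, where $\vol_{\vec v}(\cB_3)$ is the area of the central section of the cube by the hyperplane orthogonal to $\vec v$ (this depends only on the direction of $\vec v$). So for each of the two tuples I would (i) compute $\vec v_{\pmb\tau}$ explicitly and its Euclidean norm, (ii) rescale the direction of $\vec v_{\pmb\tau}$ to a vector of the form $\tr{(x\ 1\ 1)}$ with $0\leq x\leq 1$ so that Lemma \ref{l:section_area} applies, and (iii) verify the equality $|\vec v_{\pmb\tau}|=\tfrac14(4-x)\sqrt{2+x^2}$.

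For $\pmb\tau'=\bigl(2/\sqrt3,2/\sqrt3,2/\sqrt3\bigr)$ the definition gives $\vec v_{\pmb\tau'}=\tfrac34\tr{(1\ 1\ 1)}$, so $|\vec v_{\pmb\tau'}|=\tfrac{3\sqrt3}{4}$. The orthogonal direction corresponds to $x=1$ in Lemma \ref{l:section_area}, yielding section area $(4-1)\sqrt{2+1}=3\sqrt3$ (the familiar regular hexagonal section of the cube); dividing by $4$ reproduces the value of $|\vec v_{\pmb\tau'}|$ exactly.

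For $\pmb\tau''=(1,5/4,5/4)$ one finds $\tau_1\tau_2\tau_3=25/16$, hence $\vec v_{\pmb\tau''}=\tfrac{16}{25}\tr{(1\ 5/4\ 5/4)}=\tr{(16/25\ \ 4/5\ \ 4/5)}$. Its Euclidean norm is $\tfrac{1}{25}\sqrt{16^2+20^2+20^2}=\tfrac{\sqrt{1056}}{25}=\tfrac{4\sqrt{66}}{25}$. The direction of $\vec v_{\pmb\tau''}$ is proportional to $\tr{(4/5\ 1\ 1)}$, so Lemma \ref{l:section_area} with $x=4/5$ gives section area $\tfrac{16}{5}\sqrt{2+16/25}=\tfrac{16\sqrt{66}}{25}$, which divided by $4$ again matches $|\vec v_{\pmb\tau''}|$.

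There is no real obstacle here: the statement is an arithmetic verification once one recognizes the boundary condition. The only technical point is that Lemma \ref{l:section_area} is phrased for normals of the form $\tr{(x\ 1\ 1)}$ with $0\leq x\leq 1$, so one must rescale $\vec v_{\pmb\tau}$ to that shape before applying it; since the area of a central section depends solely on the direction of the normal, the rescaling is harmless.
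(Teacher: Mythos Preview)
Your proof is correct and follows exactly the same approach as the paper: compute the section areas via Lemma~\ref{l:section_area} and verify that they equal $4|\vec v_{\pmb\tau'}|$ and $4|\vec v_{\pmb\tau''}|$. You simply make explicit the arithmetic that the paper leaves to the reader, and all of your numerical checks are accurate.
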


\begin{proof}
  It suffices to calculate the areas of central sections of $\cB_3$ orthogonal to $\vec v_{\pmb\tau'}$ and $\vec v_{\pmb\tau''}$ with the help of Lemma \ref{l:section_area} and then see that they are equal to $4|\vec v_{\pmb\tau'}|$ and $4|\vec v_{\pmb\tau''}|$, respectively.
\end{proof}

Let us prove now Theorem \ref{t:2_minima_case_3}. The implication
\begin{equation} \label{eq:3_dim_theorem_1}
  \mu_1(\Pi^\ast,\Z^3)\leqslant1\implies\mu_1(\Pi,\Z^3)\leqslant2/\sqrt3
\end{equation}
is an immediate consequence of Lemma \ref{l:two_points_on_the_boundary} and Corollary \ref{cor:vertex_d_dim}.

Further argument is similar to the one we used when deriving Theorem \ref{t:2_minima_case_d} from Theorem \ref{t:family}. Suppose that $\mu_1(\Pi^\ast,\Z^3)\leq1$, but $\mu_1(\Pi,\Z^3)>1$. Consider the minimal $t_1$ such that for $\pmb\tau_1=(t_1,t_1,t_1)$ the parallelepiped $\upH_{\pmb\tau_1}\Pi$ contains a nonzero point of $\Z^3$. Then $t_1=\mu_1(\Pi,\Z^3)>1$.

Denote by $\vec v$ any integer point lying on the boundary of $\upH_{\pmb\tau_1}\Pi$ (the interior contains no nonzero integer points). As before, let us suppose that $\vec v$ is on the facet crossing the ``first'' axis of $\Pi$. Consider the minimal $t_2\geq t_1$ such that for $\pmb\tau_2=(t_1,t_2,t_2)$ the parallelepiped $\upH_{\pmb\tau_2}\Pi$ contains a nonzero integer point other than $\pm\vec v$. This point is linearly independent with $\vec v$, whence
\[ \mu_2(\Pi,\Z^3)\leq t_2. \]
If $t_2>5/4$, then the interior of $\upH_{\pmb\tau_2}\Pi$ contains a parallelepiped $\upH_{\pmb\tau''}\Pi$, where $\pmb\tau''=\big(1,5/4,5/4\big)$. There are no nonzero integer points in $\upH_{\pmb\tau''}\Pi$, since there are no such points in the interior of $\upH_{\pmb\tau_2}\Pi$. But Lemma \ref{l:two_points_on_the_boundary} and Corollary \ref{cor:vertex_d_dim} imply that such points should exist in $\upH_{\pmb\tau''}\Pi$. The contradiction obtained proves that $t_2\leq5/4$, i.e.
\begin{equation} \label{eq:3_dim_theorem_2}
  \left\{
  \begin{array}{l}
    \mu_1(\Pi^\ast,\Z^3)\leqslant1 \\
    \mu_1(\Pi,\Z^3)>1
  \end{array}
  \right.
  \implies
  \mu_2(\Pi,\Z^3)\leqslant5/4.
\end{equation}

It remains to show that the inequalities in \eqref{eq:3_dim_theorem_1} and \eqref{eq:3_dim_theorem_2} are sharp. Let us construct corresponding examples.

Let $\e$ be an arbitrary positive real number, $\e\leq1/2$. Consider the parallelepipeds
\[ \Pi=\Big\{ \vec z=\tr{(z_1\ z_2\ z_3)}\in\R^d \,\Big|\, |z_i|\leq\e,\ i=1,2,3 \Big\} \]
and
\[ \Pi^\ast=\Big\{ \vec z=\tr{(z_1\ z_2\ z_3)}\in\R^d \,\Big|\, |z_i|\leq\e^2,\ i=1,2,3 \Big\}. \]
Consider also the lattices $\La_1=\upA\Z^3$ and $\La_2=\upB\Z^3$, where
\[ \upA=
   \begin{pmatrix}
     \dfrac{\e}{\sqrt3} & \dfrac{2\e}{\sqrt3} & \dfrac{1}{3\e^2} \\
     \dfrac{\e}{\sqrt3} & \dfrac{-\e}{\sqrt3} & \dfrac{1}{3\e^2} \vphantom{\Bigg|} \\
     \dfrac{-2\e}{\sqrt3} & \dfrac{-\e}{\sqrt3} & \dfrac{1}{3\e^2}
   \end{pmatrix},
   \qquad
   \upB=
   \begin{pmatrix}
     \dfrac{\e}{2} & \dfrac{5\e}{4} & \dfrac{1}{3\e^2} \\
     \dfrac{\e}{2} & \dfrac{-3\e}{4} & \dfrac{1}{3\e^2} \vphantom{\Bigg|} \\
     -\e & \dfrac{-\e}{2} & \dfrac{1}{3\e^2}
   \end{pmatrix}, \]
and the corresponding dual lattices $\La_1^\ast=(\upA^\ast)^{-1}\Z^3$ and $\La_2^\ast=(\upB^\ast)^{-1}\Z^3$, where
\[ (\upA^\ast)^{-1}=
   \begin{pmatrix}
     0 & \dfrac{1}{\e\sqrt3} & \e^2 \\
     \dfrac{1}{\e\sqrt3} & \dfrac{-1}{\e\sqrt3} & \e^2 \vphantom{\Bigg|} \\
     \dfrac{-1}{\e\sqrt3} & 0 & \e^2
   \end{pmatrix},
   \qquad
   (\upB^\ast)^{-1}=
   \begin{pmatrix}
     \dfrac{1}{12\e} & \dfrac{1}{2\e} & \e^2 \\
     \dfrac{7}{12\e} & \dfrac{-1}{2\e} & \e^2 \vphantom{\Bigg|} \\
     \dfrac{-2}{3\e} & 0 & \e^2
   \end{pmatrix}. \]
Let us denote the columns of $\upA$, $\upB$, $(\upA^\ast)^{-1}$, $(\upB^\ast)^{-1}$ by $\vec a_i$, $\vec b_i$, $\vec a_i^\ast$, $\vec b_i^\ast$, $i=1,2,3$. Then
\[ \La_1=\spanned_{\Z}(\vec a_1,\vec a_2,\vec a_3),\quad\La_1^\ast=\spanned_{\Z}(\vec a_1^\ast,\vec a_2^\ast,\vec a_3^\ast), \]
\[ \La_2=\spanned_{\Z}(\vec b_1,\vec b_2,\vec b_3),\quad\La_2^\ast=\spanned_{\Z}(\vec b_1^\ast,\vec b_2^\ast,\vec b_3^\ast). \]

\begin{lemma} \label{l:example_points}
  Let $\nu_1=2/\sqrt3$, $\nu_2=5/4$. Then
  \begin{align*}
    & \nu_1\Pi\cap\La_1=\big\{\vec 0,\pm\vec a_1,\pm\vec a_2,\pm(\vec a_1-\vec a_2)\big\}, & &
    \interior(\nu_1\Pi)\cap\La_1=\interior(\Pi)\cap\La_2=\{\vec 0\}, \\
    & \nu_2\Pi\cap\La_2=\big\{\vec 0,\pm\vec b_1,\pm\vec b_2,\pm(\vec b_1-\vec b_2)\big\}, & &
    \interior(\nu_2\Pi)\cap\La_2=\Pi\cap\La_2=\{\vec 0,\pm\vec b_1\}.
  \end{align*}
  Besides that,
  \[ \Pi^\ast\cap\La_1^\ast=\Pi^\ast\cap\La_2^\ast=\{\vec 0,\pm\vec a_3^\ast\},\qquad
     \interior(\Pi^\ast)\cap\La_1^\ast=\interior(\Pi^\ast)\cap\La_2^\ast=\{\vec 0\}. \]
\end{lemma}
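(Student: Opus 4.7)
The lemma is a concrete enumeration of short lattice vectors in four parallelepipeds; my plan is to verify each assertion by direct computation, after first isolating one structural observation that reduces the problem from three dimensions to two.

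The observation is that in each of $\upA$, $\upB$, $(\upA^\ast)^{-1}$, $(\upB^\ast)^{-1}$ the first two columns have coordinates summing to zero, while the third column is a positive multiple of $\tr{(1\ 1\ 1)}$ — equal to $(3\varepsilon^2)^{-1}\tr{(1\ 1\ 1)}$ in the first two matrices and $\varepsilon^2\tr{(1\ 1\ 1)}$ in the other two (so in particular $\vec a_3^\ast=\vec b_3^\ast$, which is what makes $\Pi^\ast\cap\La_1^\ast=\Pi^\ast\cap\La_2^\ast$ even plausible). Therefore, for any $\mathbf{n}=\tr{(n_1\ n_2\ n_3)}\in\Z^3$, the sum of the three coordinates of $\upA\mathbf{n}$ or $\upB\mathbf{n}$ equals $n_3/\varepsilon^2$, while the sum for $(\upA^\ast)^{-1}\mathbf{n}$ or $(\upB^\ast)^{-1}\mathbf{n}$ equals $3n_3\varepsilon^2$. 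Since $\nu_1\Pi,\nu_2\Pi,\Pi\subset(5\varepsilon/4)\cB_3$ and $\Pi^\ast=\varepsilon^2\cB_3$, this together with $\varepsilon\leq 1/2$ immediately forces $n_3=0$ in the first three inclusions (as $|n_3|\leq 15\varepsilon^3/4<1$) and $|n_3|\leq 1$ in the inclusion into $\Pi^\ast$.

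Once $n_3=0$, the inclusion $\upA\mathbf{n}\in\nu_1\Pi$ becomes the integer system $|n_1+2n_2|,\,|n_1-n_2|,\,|2n_1+n_2|\leq 2$; enumerating gives precisely the six nonzero solutions $\pm(1,0),\pm(0,1),\pm(1,-1)$, i.e.\ $\pm\vec a_1,\pm\vec a_2,\pm(\vec a_1-\vec a_2)$, while the strict versions (and the tighter inclusion into $\Pi$) admit only $(0,0)$. For $\upB$ the analogous system reads $|2n_1+5n_2|,\,|2n_1-3n_2|,\,|4n_1+2n_2|\leq 5$ and has the analogous six nonzero solutions; tightening to $\Pi$ or to $\interior(\nu_2\Pi)$ eliminates those which saturate some inequality — namely $\pm\vec b_2$ and $\pm(\vec b_1-\vec b_2)$ — leaving $\pm\vec b_1$. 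For the dual inclusions, once $|n_3|\leq 1$ is in hand, the third coordinate of $(\upA^\ast)^{-1}\mathbf{n}$ is $-n_1/(\varepsilon\sqrt 3)+n_3\varepsilon^2$ and is bounded by $\varepsilon^2$, giving $|n_1|\leq 2\sqrt 3\,\varepsilon^3<1$, hence $n_1=0$; the first coordinate then forces $n_2=0$, leaving $\{\vec 0,\pm\vec a_3^\ast\}$ (and only $\vec 0$ in the strict version). Entirely analogous arithmetic for $(\upB^\ast)^{-1}$, combined with $\vec a_3^\ast=\vec b_3^\ast$, closes the remaining dual claim.

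The only real obstacle is the bookkeeping: several of the nonzero integer pairs saturate exactly one of the three inequalities, and correctly distinguishing membership in the closed parallelepiped from membership in its interior depends on reading the finite tables carefully. Once this is done, the lemma is entirely routine.
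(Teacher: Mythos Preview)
Your proof is correct and follows essentially the same approach as the paper: eliminate the third coefficient $n_3$ by a size estimate (the paper uses the three inequalities directly, you use the column-sum observation, but both reach $|n_3|\lesssim\varepsilon^3<1$), then enumerate the finitely many two-dimensional integer solutions and check which lie on the boundary. Your structural observation that the first two columns of each matrix have zero coordinate sum while the third is a multiple of $\tr{(1\ 1\ 1)}$ is a mild streamlining of the paper's bookkeeping, but the overall strategy is identical.
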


\begin{proof}
  Suppose $\vec a=k_1\vec a_1+k_2\vec a_2+k_3\vec a_3$ with integer $k_1,k_2,k_3$ and suppose $\vec a\in\nu_1\Pi$. Then the sup-norm of $\vec a$ does not exceed $2\e/\sqrt3$. Which implies the inequalities
  \begin{equation} \label{eq:k's}
    \left\{
    \begin{array}{l}
      -1\leq\dfrac{k_1}{2}+k_2+\dfrac{k_3}{2\sqrt3\,\e^3}\leq1 \\
      -1\leq\dfrac{k_1}{2}-\dfrac{k_2}{2}+\dfrac{k_3}{2\sqrt3\,\e^3}\leq1 \\
      -1\leq-k_1-\dfrac{k_2}{2}+\dfrac{k_3}{2\sqrt3\,\e^3}\leq1
    \end{array}
    \right..
  \end{equation}
  Hence $|k_3|\leq2\sqrt3\,\e^3<1$, as $\e\leq1/2$. Therefore, $k_3=0$. Now it follows from \eqref{eq:k's} that $|k_1|\leq4/3$ and $|k_2|\leq4/3$, i.e. $k_1,k_2\in\{-1,0,1\}$. But $k_1=k_2=\pm1$ does not satisfy the first equation. It remains to verify explicitly that $\pm\vec a_1,\pm\vec a_2,\pm(\vec a_1-\vec a_2)$ lie on the boundary of $\nu_1\Pi$.

  Using similar argument for $\vec b=k_1\vec b_1+k_2\vec b_2+k_3\vec b_3\in\nu_2\Pi$ we get $k_3=0$, $k_1,k_2\in\{-1,0,1\}$. After which it remains to verify that $\pm(\vec b_1+\vec b_2)$ are not in $\nu_2\Pi$, that $\pm\vec b_2,\pm(\vec b_1-\vec b_2)$ are on the boundary of $\nu_2\Pi$, and that $\pm\vec b_1$ are on the boundary of $\Pi$.

  The statements of our Lemma concerning $\Pi^\ast$ are proved with the same method, and the argument is very simple due to the inequality $\e\leq1/2$ and to the fact that some of the coordinates of $\vec a_1^\ast$, $\vec a_2^\ast$, $\vec b_2^\ast$ are zero.
\end{proof}

\begin{corollary} \label{cor:example_La}
  We have
  \[ \mu_1(\Pi^\ast,\La_1^\ast)=\mu_1(\Pi^\ast,\La_2^\ast)=\mu_1(\Pi,\La_2)=1,\quad
     \mu_1(\Pi,\La_1)=2/\sqrt3,\quad\mu_2(\Pi,\La_2)=5/4. \]
\end{corollary}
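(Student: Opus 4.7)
\textbf{Proof plan for Corollary \ref{cor:example_La}.} The corollary is essentially a direct reading of Lemma \ref{l:example_points} through the definition of successive minima, so the plan reduces to four short verifications, each amounting to checking a sup-norm of one of the listed column vectors against the size of the relevant parallelepiped.

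First I would handle the two dual minima simultaneously. One verifies by direct inspection that $\vec a_3^\ast$ and $\vec b_3^\ast$ are both equal to $\tr{(\e^2\ \e^2\ \e^2)}$, whose sup-norm equals $\e^2$; hence this point lies on the boundary of $\Pi^\ast$. Combined with the lemma's assertion that $\interior(\Pi^\ast)\cap\La_i^\ast=\{\vec 0\}$ for $i=1,2$, this immediately yields $\mu_1(\Pi^\ast,\La_1^\ast)=\mu_1(\Pi^\ast,\La_2^\ast)=1$. Second, $\vec b_1=\tr{(\e/2\ \e/2\ -\e)}$ has sup-norm $\e$, so $\vec b_1$ sits on the boundary of $\Pi$; since the lemma tells us $\Pi\cap\La_2=\{\vec 0,\pm\vec b_1\}$, the interior of $\Pi$ contains no nonzero point of $\La_2$, which gives $\mu_1(\Pi,\La_2)=1$.

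Third, for $\mu_1(\Pi,\La_1)$ with $\nu_1=2/\sqrt3$, the lemma provides $\interior(\nu_1\Pi)\cap\La_1=\{\vec 0\}$, while $\vec a_1=\tr{(\e/\sqrt3\ \e/\sqrt3\ -2\e/\sqrt3)}$ has sup-norm $2\e/\sqrt3=\nu_1\cdot\e$ and therefore lies on the boundary of $\nu_1\Pi$; consequently $\mu_1(\Pi,\La_1)=\nu_1=2/\sqrt3$. Finally, for $\mu_2(\Pi,\La_2)$ with $\nu_2=5/4$, the lemma gives $\interior(\nu_2\Pi)\cap\La_2=\{\vec 0,\pm\vec b_1\}\subset\Z\vec b_1$, so no pair of linearly independent lattice points lives strictly inside $\nu_2\Pi$; however, $\vec b_1,\vec b_2\in\nu_2\Pi\cap\La_2$ are linearly independent (with $\vec b_2$ of sup-norm $5\e/4$, i.e.\ on the boundary of $\nu_2\Pi$), which forces $\mu_2(\Pi,\La_2)=\nu_2=5/4$.

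There is no real obstacle in this argument: the hard combinatorial work of enumerating the lattice points in the relevant scaled parallelepipeds has already been carried out in Lemma \ref{l:example_points}. The only thing to watch is correctly identifying which of the listed lattice vectors lies on the boundary of which parallelepiped, and for $\mu_2$ the extra observation that the interior points sit on a single line through the origin, so that the second minimum is realized only once one reaches the boundary.
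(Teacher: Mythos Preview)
Your proposal is correct and matches the paper's intended approach: the paper states Corollary~\ref{cor:example_La} without proof, as an immediate consequence of Lemma~\ref{l:example_points}, and your argument is precisely the direct reading of that lemma through the definition of successive minima. The only remark is that several of your explicit sup-norm checks are already implicit in the lemma's boundary/interior statements (e.g.\ the lemma itself records $\interior(\Pi)\cap\La_2=\{\vec 0\}$ and $\Pi\cap\La_2=\{\vec 0,\pm\vec b_1\}$), so some of the verifications you propose are redundant rather than new work, but this does no harm.
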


Actually, in view of Remark \ref{rem:reformulatability}, Corollary \ref{cor:example_La} is already what we need. But to be complete let us reformulate it. Set
\[ \Pi_1=\upA^{-1}\Pi,\qquad\Pi_2=\upB^{-1}\Pi. \]
Then
%\[ \Pi_1^\ast=\upA^\ast\Pi^\ast,\qquad\Pi_2^\ast=\upB^\ast\Pi^\ast. \]
\[ \upA^{-1}\La_1=\upB^{-1}\La_2=\upA^\ast\La_1^\ast=\upB^\ast\La_2^\ast=\Z^3. \]
We get the following reformulation of Corollary \ref{cor:example_La}.

\begin{corollary} \label{cor:example_Z}
  We have
  \[ \mu_1(\Pi_1^\ast,\Z^3)=\mu_1(\Pi_2^\ast,\Z^3)=\mu_1(\Pi_2,\Z^3)=1,\quad
     \mu_1(\Pi_1,\Z^3)=2/\sqrt3,\quad\mu_2(\Pi_2,\Z^3)=5/4. \]
\end{corollary}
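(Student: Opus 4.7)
The plan is to treat Corollary \ref{cor:example_Z} as a direct linear-algebraic translation of Corollary \ref{cor:example_La} from the lattices $\La_1,\La_2$ to $\Z^3$, using the invariance of successive minima under invertible linear maps, together with the transformation rule for pseudo-compound bodies that was already recorded in the paper, namely $(\upT\Pi)^\ast=(\upT^\ast)^{-1}\Pi^\ast$ (valid when $|\det\upT|=1$) and $(\upT\La)^\ast=(\upT^\ast)^{-1}\La^\ast$.

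First, I would compute $\det\upA$ and $\det\upB$ and verify $|\det\upA|=|\det\upB|=1$. For $\upA$ this is done by pulling the common factor $\e/\sqrt3$ out of the first two columns and $1/(3\e^2)$ out of the third, reducing to a $3\times3$ integer determinant equal to $\pm9$; an analogous computation handles $\upB$. Consequently $\La_1=\upA\Z^3$ and $\La_2=\upB\Z^3$ are unimodular, and by the very definitions $\upA^{-1}\La_1=\upB^{-1}\La_2=\Z^3$ while $\upA^{\ast}\La_1^\ast=\upB^{\ast}\La_2^\ast=\Z^3$.

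Next, I would rewrite each of the five equalities in Corollary \ref{cor:example_La} under the substitution $\vec z\mapsto\upA^{-1}\vec z$ (resp.\ $\vec z\mapsto\upB^{-1}\vec z$). For the ``primal'' parallelepipeds this is immediate from $\mu_k(\upT M,\upT\La)=\mu_k(M,\La)$: e.g.\ $\mu_1(\Pi,\La_1)=\mu_1(\upA^{-1}\Pi,\upA^{-1}\La_1)=\mu_1(\Pi_1,\Z^3)$, and similarly for $\mu_1(\Pi,\La_2)$ and $\mu_2(\Pi,\La_2)$. For the dual parallelepipeds I apply the pseudo-compound transformation rule with $\upT=\upA^{-1}$, getting $(\upA^{-1}\Pi)^\ast=\upA^{\ast}\Pi^\ast$, so
\[
  \mu_1(\Pi_1^\ast,\Z^3)=\mu_1(\upA^{\ast}\Pi^\ast,\upA^{\ast}\La_1^\ast)=\mu_1(\Pi^\ast,\La_1^\ast)=1,
\]
and likewise $\mu_1(\Pi_2^\ast,\Z^3)=\mu_1(\Pi^\ast,\La_2^\ast)=1$.

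There is really no hard step here; the whole content is a change-of-variables bookkeeping exercise. The only point requiring a little care is checking that the hypothesis $|\det\upT|=1$ is indeed met, so that the pseudo-compound rule from the paper is applicable verbatim — once the determinant computation is in hand, the five asserted equalities fall out line by line from Corollary \ref{cor:example_La}.
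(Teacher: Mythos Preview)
Your proposal is correct and matches the paper's own argument: the paper simply defines $\Pi_1=\upA^{-1}\Pi$, $\Pi_2=\upB^{-1}\Pi$, records the identities $\upA^{-1}\La_1=\upB^{-1}\La_2=\upA^\ast\La_1^\ast=\upB^\ast\La_2^\ast=\Z^3$, and declares Corollary~\ref{cor:example_Z} a reformulation of Corollary~\ref{cor:example_La}. Your write-up just makes the underlying bookkeeping (determinant check, invariance $\mu_k(\upT M,\upT\La)=\mu_k(M,\La)$, and the pseudo-compound rule $(\upT\Pi)^\ast=(\upT^\ast)^{-1}\Pi^\ast$ for $|\det\upT|=1$) explicit.
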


Thus, we have constructed examples which confirm sharpness of the inequalities \eqref{eq:3_dim_theorem_1} and \eqref{eq:3_dim_theorem_2}, and have proved Theorem \ref{t:2_minima_case_3}.

\vskip 9mm
\noindent
\textbf{\Large Acknowledgements}
\vskip 4mm

The authors are grateful to Nikolay Moshchevitin for fruitful discussions which stimulated Lemma \ref{l:wedge_minkowski} to appear, and also to Konstantin Ryutin for the reference \cite{lutwak}.

\end{document}